\newtheorem{theorem}{Theorem}[section]
\newtheorem{proposition}[theorem]{Proposition}
\newtheorem{corollary}[theorem]{Corollary}
\theoremstyle{definition}
\newtheorem{definition}[theorem]{Definition}
\numberwithin{equation}{section}
\newcommand\C{\mathbb{C}}
\begin{document}

\title[Derivations and homomorphisms in commutator-simple algebras]{Derivations and homomorphisms in commutator-simple algebras}

\author{J. Alaminos}
\author{M. Bre\v sar}
\author{J. Extremera}
\author{M. L. C. Godoy}
\author{A.\,R. Villena}
\address{J. Alaminos, J. Extremera,
M. L. C. Godoy, and A.\,R. Villena, Departamento de An\' alisis
Matem\' atico, Fa\-cul\-tad de Ciencias, Universidad de Granada,
 Granada, Spain} \email{alaminos@ugr.es, 
 jlizana@ugr.es,
 mgodoy@ugr.es,
 avillena@ugr.es}
\address{M. Bre\v sar,  Faculty of Mathematics and Physics,  University of Ljubljana,
 and Faculty of Natural Sciences and Mathematics, University 
of Maribor, Slovenia} \email{matej.bresar@fmf.uni-lj.si}

\thanks{The first, third, fourth, and fifth authors were supported by the
Grant PID2021-122126NB-C31 funded by MCIN/AEI/ 10.13039/501100011033 and by ``ERDF A way of making Europe''
and grant FQM-185 funded by Junta de Andaluc\' ia.   
The second author was supported by the Slovenian Research Agency (ARRS) Grant P1-0288. 
The fourth author was also supported by grant FPU18/00419 funded by MIU}

\keywords{Derivation, automorphism, antiautomorphism, Jordan automorphism, local derivation, local automorphism, group algebra}



\subjclass[2020]{43A20, 47L10, 16W20, 16W25}

\maketitle

\begin{abstract}
We call an algebra $A$ commutator-simple  if $[A,A]$  does not contain nonzero ideals of $A$.  After providing several examples,
we  show that in these algebras  
derivations are determined by  a condition that is applicable to the study of  local derivations. 
This enables us to prove that every continuous local derivation $D\colon L^1(G)\to L^1(G)$, 
where $G$ is a unimodular locally compact group, is a derivation.  We also give some remarks 
on homomorphism-like maps in commutator-simple algebras.
\end{abstract}

\section{Introduction} 

This paper is a continuation of \cite{Bj} in which derivations and Jordan automorphisms of semisimple finite-dimensional algebras 
$A$ were determined by certain conditions involving $[A,A]$, the linear span of all commutators in $A$. As a byproduct, new results 
on local derivations and local Jordan automorphisms were obtained.  We recall that a linear map $D$ from an algebra $A$ to itself 
is called a {\em local derivation} if for every $x\in A$ there exists a derivation $D_x \colon A\to A$ such that $D(x)=D_x(x)$. 
Other ``local maps'' 
such as local automorphisms are defined similarly. The obvious problem whether a local derivation is necessarily a derivation 
(or a local automorphism is necessarily an automorphism, etc.)  has been an active research area since the early 1990s when it 
was proposed independently by Kadison~\cite{K} and Larson and Sourour~\cite{LS}.  

We will consider similar conditions in arbitrary, not necessarily finite-dimensional algebras, as well as in Banach algebras. 
Our main motivation behind the latter has been an open question whether every continuous local derivation $D$ of $L^1(G)$, the group 
algebra of a locally compact group $G$, is a derivation. 

The following definition introduces a class of algebras to which our methods are applicable.

\begin{definition} 
An algebra $A$ is said to be \emph{commutator-simple} if $[A,A]$ does not contain nonzero ideals of $A$.
\end{definition}

Section \ref{s2} is devoted to examples of commutator-simple algebras and Banach algebras. 

In Section \ref{s3} we first show that if $A$ is a commutator-simple semiprime  algebra  $A$ (over a field of characteristic not $2$), 
then a linear map $D \colon A\to A$ satisfying $$D(x)x,D(x)x^2\in [A,A]$$ for every $x\in A$ is a  derivation. Using this, we then prove that continuous local derivations on $L^1(G)$ are derivations, provided that $G$ is a unimodular locally compact group. While this does not completely solve the aforementioned question since we still need some condition on $G$, it does generalizes and unifies several known results (see the final part of Section~\ref{s3} for details). Besides, the approach we take is entirely different from those used earlier.

The final Section~\ref{s4} discusses the condition that a linear map $T\colon A\to A$, where $A$ is a commutator-simple algebra, satisfies 
$$T(x)^3-x^3\in[A,A]$$ for every $x\in A$. Under suitable assumptions, $T$ is shown to be   a Jordan homomorphism.  Some applications 
to local automorphisms are also given.

\section{Commutator-simple algebras}\label{s2}

By an algebra we mean an associative algebra which does not necessarily possess a unity. When it does, we call it a unital algebra.
We use $F$ to denote a field, and we assume that our algebras are over $F$. When speaking of Banach algebras, we assume that $F=\C$.

The purpose of this section is to  provide examples and constructions of commutator-simple algebras.
Our first proposition is trivial, but we record it since one can construct new commutator-simple algebras from old ones (see  Propositions~\ref{dp} and~\ref{pten}).
\begin{proposition} 
Every commutative algebra is commutator-simple.
\end{proposition}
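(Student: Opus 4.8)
The plan is to unwind the definitions and observe that there is essentially nothing to prove. First I would recall that, by definition, $A$ is commutator-simple precisely when the subspace $[A,A]$ contains no nonzero ideal of $A$. So the task reduces to identifying $[A,A]$ for a commutative algebra.

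Next I would note that if $A$ is commutative then $xy - yx = 0$ for all $x,y \in A$, hence every commutator vanishes and $[A,A] = \{0\}$. Consequently the only ideal of $A$ contained in $[A,A]$ is the zero ideal, which means $[A,A]$ contains no \emph{nonzero} ideal of $A$. By the definition above, $A$ is commutator-simple, which is exactly the claim.

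There is no genuine obstacle here: the statement is purely formal, and the only ``step'' is the trivial observation that commutativity forces $[A,A] = \{0\}$. I record it only because, as remarked before the statement, it serves as a base case for building further commutator-simple algebras via the later constructions (cf.\ Propositions~\ref{dp} and~\ref{pten}).
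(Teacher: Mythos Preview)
Your proposal is correct and matches the paper's treatment: the paper itself calls the proposition trivial and gives no proof, since commutativity forces $[A,A]=\{0\}$, which contains no nonzero ideal.
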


If there exists a linear functional $\tau$ on $A^2$, the linear span of all elements of the form $xy$ with $x,y\in A$, such that for all $x,y\in A$,
\begin{equation}\label{eq1}
\tau(xy)=\tau(yx),
\end{equation} 
and for all $x\in A$,
\begin{equation}\label{eq2}
  \tau(xA)=\{0\} \ \implies \ x=0,
\end{equation} 
then $A$ is obviously  a commutator-simple algebra.
The simplest example is the trace on the matrix algebra $M_n(F)$. 
Slightly more generally, we have the following.

\begin{proposition}\label{frank} 
The algebra of all finite rank  linear operators on a (finite or infinite dimensional) vector space $X$ is commutator-simple. 
\end{proposition}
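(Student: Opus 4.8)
The plan is to exhibit a linear functional $\tau$ on $\FiniteRank(X)^2$ satisfying the two conditions \eqref{eq1} and \eqref{eq2}, and then invoke the observation made just before the statement. The natural candidate is the usual trace of a finite rank operator: for $T\in\FiniteRank(X)$, write $T=\sum_{i=1}^{n}\varphi_i(\cdot)\,x_i$ with $\varphi_i\in X^*$ and $x_i\in X$, and set $\tau(T)=\sum_{i=1}^{n}\varphi_i(x_i)$. First I would check that $\tau$ is well defined: the value $\sum_i\varphi_i(x_i)$ depends only on $T$, not on the chosen representation, because it equals the trace of the induced operator on the finite-dimensional subspace $\linspan\{x_1,\dots,x_n\}$ (enlarged to contain the range of $T$), and any two such finite-dimensional "compressions'' of $T$ agree on a common larger invariant subspace. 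Since every element of $\FiniteRank(X)^2$ is again a finite rank operator (the product of two finite rank operators is finite rank, and sums stay finite rank), $\tau$ is defined on all of $\FiniteRank(X)^2$; in fact it is simply the restriction of the trace on $\FiniteRank(X)$.

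Next I would verify \eqref{eq1}, the trace property $\tau(ST)=\tau(TS)$ for $S,T\in\FiniteRank(X)$. This is the standard computation: if $S=\sum_i\varphi_i(\cdot)x_i$ and $T=\sum_j\psi_j(\cdot)y_j$, then $ST=\sum_{i,j}\psi_j(x_i)\,\varphi_i(\cdot)\,y_j$ has trace $\sum_{i,j}\psi_j(x_i)\varphi_i(y_j)$, which is symmetric in the roles of $S$ and $T$. Then for \eqref{eq2}: suppose $T\in\FiniteRank(X)$ satisfies $\tau(TS)=0$ for all $S\in\FiniteRank(X)$. Taking $S$ to be the rank-one operator $\psi(\cdot)x$ gives $\tau(TS)=\tau\bigl(\psi(\cdot)\,Tx\bigr)=\psi(Tx)=0$ for every $\psi\in X^*$ and every $x\in X$; since $X^*$ separates the points of $X$, this forces $Tx=0$ for all $x$, i.e.\ $T=0$. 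With \eqref{eq1} and \eqref{eq2} in hand, the remark preceding the proposition gives at once that $\FiniteRank(X)$ is commutator-simple.

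I do not expect a serious obstacle here; the proposition is essentially the assertion that the trace on $\FiniteRank(X)$ still makes sense and still satisfies $\tr(ST)=\tr(TS)$ when $X$ is infinite dimensional, together with faithfulness. The only point requiring a little care is the well-definedness of $\tau$, i.e.\ independence of the rank-one decomposition of $T$; the cleanest way around this is to \emph{define} $\tau(T)$ intrinsically as the trace of the restriction of $T$ to any finite-dimensional subspace $Y\subseteq X$ with $\Image T\subseteq Y$ and then observe that distinct such $Y$ give the same number, rather than fixing a decomposition from the outset. One could alternatively avoid the functional $\tau$ altogether and argue directly that a nonzero ideal of $\FiniteRank(X)$ must contain a rank-one idempotent $e=\varphi(\cdot)x$ with $\varphi(x)=1$, which has nonzero trace and hence cannot lie in $[\FiniteRank(X),\FiniteRank(X)]\subseteq\Ker\tau$; but the $\tau$-based route is shorter given the machinery already set up in the excerpt.
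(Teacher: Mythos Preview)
Your proposal is correct and follows essentially the same route as the paper: both invoke the natural trace $\tau$ on finite rank operators and verify conditions~\eqref{eq1} and~\eqref{eq2}. The paper is terser---it simply asserts that the algebra has a trace and then checks~\eqref{eq2} via $\tau\bigl(x(\xi\otimes f)\bigr)=f(x(\xi))$---whereas you spell out the well-definedness of $\tau$ and the verification of~\eqref{eq1} as well; but the argument is the same.
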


\begin{proof}
This algebra has a trace $\tau$.
If $x$ is a finite rank linear operator on $X$, then 
\[
\tau\bigl(x(\xi\otimes f)\bigr)=f(x(\xi))
\]
for each  $\xi\in X$ and each linear functional $f$ on $X$. This clearly implies that~$\tau$ satisfies~\eqref{eq2}.
\end{proof}


\begin{proposition}
For any (not necessarily finite) group $G$, the group algebra $F[G]$ is commutator-simple.
\end{proposition}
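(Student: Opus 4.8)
The plan is to invoke the sufficient condition recorded just before Proposition~\ref{frank}: since $F[G]$ is unital we have $F[G]^2 = F[G]$, so it is enough to exhibit a linear functional $\tau$ on $F[G]$ satisfying \eqref{eq1} and \eqref{eq2}. The natural choice is the ``coefficient of the identity'' functional. Writing a generic element of $F[G]$ as a finite sum $x = \sum_{g\in G} a_g\, g$ with $a_g\in F$ (recall that elements of the group algebra are finitely supported), set $\tau(x) = a_e$, where $e$ is the unit of $G$. This is visibly linear and well defined for an arbitrary, possibly infinite, group $G$.

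Next I would verify the two conditions. For \eqref{eq1}, given $x = \sum_g a_g\, g$ and $y = \sum_h b_h\, h$ one computes $xy = \sum_{g,h} a_g b_h\, gh$, so $\tau(xy) = \sum_{gh = e} a_g b_h = \sum_{g\in G} a_g b_{g^{-1}}$; the analogous computation for $yx$ gives $\tau(yx) = \sum_{h\in G} b_h a_{h^{-1}}$, and the substitution $h\mapsto g^{-1}$ identifies this with $\sum_{g\in G} b_{g^{-1}} a_g$, i.e.\ with $\tau(xy)$. For \eqref{eq2}, suppose $\tau(xz) = 0$ for every $z\in F[G]$, where $x = \sum_g a_g\, g$. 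Testing against $z = g^{-1}$ and noting that the coefficient of $e$ in $x g^{-1} = \sum_h a_h\, (h g^{-1})$ is exactly $a_g$, we obtain $a_g = \tau(x g^{-1}) = 0$ for every $g\in G$, whence $x = 0$. By the quoted criterion, $F[G]$ is commutator-simple.

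As for the main obstacle: there is essentially none, the argument being a direct verification of \eqref{eq1} and \eqref{eq2}. The only point that calls for a moment's care is the reindexing $h\mapsto g^{-1}$ in the trace identity, together with the (trivial but worth noting) observation that all sums occurring are finite, so that this manipulation causes no difficulty even when $G$ is infinite.
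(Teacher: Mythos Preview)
Your proof is correct and follows essentially the same approach as the paper: define $\tau(x)$ as the coefficient of $e$, verify \eqref{eq1} by the reindexing $g\mapsto g^{-1}$, and verify \eqref{eq2} by testing against $g^{-1}$. The only cosmetic difference is that you explicitly note $F[G]^2 = F[G]$ to justify taking $\tau$ on all of $F[G]$, which the paper leaves implicit.
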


\begin{proof}
For any element $x= \sum_{g\in G} \lambda_g g\in F[G]$, we define $\tau(x)$, the trace of $x$, as $\lambda_e$, the coefficient of the identity  $e\in G$. Taking another element $y=\sum_{g\in G} \mu_g g\in F[G]$,
we have 
\[
\tau(xy) = \sum_{g\in G} \lambda_g\mu_{g^{-1}} = \sum_{g\in G} \mu_g\lambda_{g^{-1}}= \tau(yx) .
\]
Thus, $\tau$ satisfies~\eqref{eq1}. Since $\tau(x g^{-1}) = \lambda_g$, it also satisfies~\eqref{eq2}.
\end{proof}

Proving that the ordinary group algebra $F[G]$ is commutator-simple is thus very easy. However, 
we are more interested in group algebras $L^1(G)$ where $G$ is a locally compact group. We will discuss this at the end of the section.

The following  proposition needs no proof.

\begin{proposition}\label{dp}
If $(A_i)_{i\in I}$ is a family of commutator-simple  algebras, then the direct product $\Pi_{i\in I} A_i$ is also commutator-simple.
\end{proposition}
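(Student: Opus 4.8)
The plan is to argue by contradiction using the coordinate projections. Write $A=\prod_{i\in I}A_i$ and let $\pi_i\colon A\to A_i$ denote the canonical projection, which is a surjective algebra homomorphism. Suppose, toward a contradiction, that $J$ is a nonzero ideal of $A$ with $J\subseteq[A,A]$.

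First I would record two elementary facts about each $\pi_i$. Since $\pi_i$ is a surjective homomorphism, $\pi_i(J)$ is an ideal of $A_i$. Moreover, because $\pi_i$ is linear and $\pi_i([x,y])=[\pi_i(x),\pi_i(y)]$ for all $x,y\in A$, and because $\pi_i$ is onto, we get $\pi_i([A,A])=[A_i,A_i]$. Consequently $\pi_i(J)\subseteq\pi_i([A,A])=[A_i,A_i]$, so $\pi_i(J)$ is an ideal of $A_i$ contained in $[A_i,A_i]$.

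Now I would invoke the hypothesis that each $A_i$ is commutator-simple to conclude $\pi_i(J)=\{0\}$ for every $i\in I$; that is, $J\subseteq\Ker\pi_i$ for all $i$. Since $\bigcap_{i\in I}\Ker\pi_i=\{0\}$ in the direct product, this forces $J=\{0\}$, contradicting the choice of $J$. Hence $[A,A]$ contains no nonzero ideal of $A$, i.e.\ $A$ is commutator-simple.

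There is no real obstacle here — which is precisely why the paper states that the proposition needs no proof — and the only points that merit even a moment's attention are that the image of an ideal under a surjective homomorphism is again an ideal, that $\pi_i$ carries commutators onto commutators, and that the kernels of the projections intersect trivially. In particular the index set $I$ need not be finite, and the same argument works verbatim for any subalgebra of $\prod_{i\in I}A_i$ on which the projections are still surjective.
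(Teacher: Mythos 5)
Your argument is correct and is precisely the routine projection argument the authors have in mind when they say the proposition ``needs no proof'': $\pi_i(J)$ is an ideal of $A_i$ contained in $[A_i,A_i]$, hence zero, and the kernels of the projections meet trivially. Your closing remark that the same reasoning covers any subalgebra of $\Pi_{i\in I}A_i$ on which the projections remain surjective (e.g.\ the direct sum) is also correct.
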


We remark that a simple algebra $A$ is obviously commutator-simple if and only if $A\ne [A,A]$.

\begin{proposition} 
\label{p1} Every finite-dimensional semisimple algebra $A$ is commut\-ator-simple. 
\end{proposition}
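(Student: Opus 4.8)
The plan is to reduce, by the Wedderburn--Artin theorem together with Proposition~\ref{dp}, to a single matrix algebra over a division algebra, and there to rule out the equality $A=[A,A]$ by means of a trace functional. So I would first write $A\cong M_{n_1}(D_1)\times\cdots\times M_{n_k}(D_k)$ with each $D_i$ a finite-dimensional division algebra over $F$. Commutator-simplicity is clearly an isomorphism invariant and, by Proposition~\ref{dp}, passes to direct products, so it suffices to show that each factor $M_n(D)$ is commutator-simple; and since $M_n(D)$ is simple, the remark recorded just before the statement reduces this to proving $M_n(D)\ne[M_n(D),M_n(D)]$.

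For the latter I would use the reduced trace. The center $Z$ of $M_n(D)$ (which equals the center of $D$) is a finite field extension of $F$, and $M_n(D)$ is a central simple $Z$-algebra; let $\operatorname{Trd}\colon M_n(D)\to Z$ be its reduced trace. This is a nonzero $Z$-linear, hence $F$-linear, map satisfying $\operatorname{Trd}(xy)=\operatorname{Trd}(yx)$, so every commutator lies in $\Ker\operatorname{Trd}$, which is a proper subspace. (If one prefers to avoid reduced traces: extend scalars from $Z$ to a splitting field $L$, so $M_n(D)\otimes_Z L\cong M_d(L)$ with $d^2=\dim_Z M_n(D)$; then $[M_n(D),M_n(D)]\otimes_Z L$ sits inside $[M_d(L),M_d(L)]$, the trace-zero matrices, a hyperplane in $M_d(L)$, and comparing $L$-dimensions gives $\dim_Z[M_n(D),M_n(D)]\le d^2-1<d^2$.)

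The reduction steps are routine, and the only genuine input is structure-theoretic: the Wedderburn--Artin decomposition and the nonvanishing of the reduced trace — equivalently, the classical identification of $[M_d(L),M_d(L)]$ with the hyperplane of trace-zero matrices. This is also the point that makes the statement characteristic-free: even when $\operatorname{char}F$ divides $d$, so that the identity of $M_n(D)$ has reduced trace $0$ and may itself be a sum of commutators, the commutator space remains a proper subspace. Beyond quoting these standard facts I do not expect any real obstacle.
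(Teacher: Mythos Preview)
Your proposal is correct and follows essentially the same route as the paper: reduce via Proposition~\ref{dp} and Wedderburn--Artin to a single simple factor, then show that factor is not equal to its commutator space by a trace argument. The only cosmetic difference is that the paper, rather than naming the reduced trace, first passes to the center to make $A$ central, extends scalars to the algebraic closure to get $M_n(\overline{F})$, and then invokes Proposition~\ref{frank} (the ordinary matrix trace)---which is precisely your parenthetical alternative.
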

\begin{proof} In light of Proposition~\ref{dp}, we may assume that $A$ is simple. 
The property that a unital $F$-algebra $A$ is commutator-simple does not depend on the field $F$, 
so there is no loss of generality in assuming that $A$ is a central $F$-algebra (i.e., the center of $A$ is $F$). 
Let $\overline{F}$ be the algebraic closure of $F$. It is easy to see that $A=[A,A]$ implies that $\overline{A} = \overline{F}\otimes_F A$, the scalar extension of $A$ to $\overline{F}$, also satisfies $\overline{A} =[\overline{A},\overline{A}]$. However, 
$\overline{A}$ is isomorphic to the matrix algebra $M_n(\overline{F})$  which, by Proposition \ref{frank},  does not have this property. Therefore,  
$A\ne [A,A]$, meaning that $A$ is commutator-simple.
\end{proof}

Proposition~\ref{p1} does not hold for infinite-dimensional simple algebras, in fact not even for division algebras -- see, 
for example, \cite{H}. One may therefore ask what are examples of infinite-dimensional algebras that are both simple and 
commutator-simple. 

The classical Weyl algebra $A_1$ is simple, but  not commutator-simple (as it is equal to $[A_1,A_1]$). However, many  \emph{generalized Weyl algebras} $A(F[z], a, \sigma)$ are commutator-simple. These algebras were introduced 
by V. V. Bavula \cite{Bav} to whom the authors are thankful for providing the relevant information. We recall the definition.  
Let  $a$ be an element of the polynomial algebra $F[z]$, where $F$ is a field of characteristic $0$, and let $\sigma$ be an 
automorphism of $F[z]$. The generalized Weyl algebra $A=A(F[z], a, \sigma)$ is the algebra obtained by adjoining to $F[z]$ 
the new variables $x$ and $y$ subject to the relations
\[
yx=a,\  xy= \sigma(a),
\]
and 
\[
xf = \sigma(f) x,\ fy = y\sigma(f)
\]
for all $f\in F[z]$. This algebra is simple if and only if  the difference of two roots of the polynomial $a$ is not an integer~\cite{Bav}. Further, if $\sigma$ is defined by  $\sigma(f)=f-\lambda$ with $0\ne \lambda\in F$ and the degree of $a$ is greater than $1$, then $A\ne [A,A]$ by \cite{FSS}. Therefore, the following holds.

\begin{proposition}
Let $a\in F[z]$ be a polynomial of degree at least $2$ such that the difference of two roots of  $a$ is not an integer, 
and let $\sigma$ be the automorphism of $F[z]$ defined by  $\sigma(f)=f-\lambda$ with $0\ne \lambda\in F$. Then the
generalized Weyl algebra $A(F[z], a, \sigma)$ is  commutator-simple (and simple).
\end{proposition}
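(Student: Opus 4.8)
The plan is to obtain the proposition by combining the two properties of generalized Weyl algebras already quoted in the discussion preceding it with the elementary observation recorded in the remark just before Proposition~\ref{p1}: a simple algebra is commutator-simple if and only if it is not equal to its own commutator space $[A,A]$ (the only nonzero ideal of a simple algebra is the whole algebra, so $[A,A]$ can contain a nonzero ideal precisely when $A=[A,A]$). Thus there is no genuinely new content to supply; the argument is a short assembly, and the only thing to verify is that the hypotheses of the cited results match our standing hypotheses.

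Carrying this out, set $A=A(F[z],a,\sigma)$. First I would invoke Bavula's simplicity criterion \cite{Bav}: since the difference of two roots of $a$ is not an integer, $A$ is simple. Next I would invoke \cite{FSS}: since $\sigma$ has the form $\sigma(f)=f-\lambda$ with $0\neq\lambda\in F$ and $\deg a\geq 2$, we have $A\neq[A,A]$. By the remark above, a simple algebra with $A\neq[A,A]$ is commutator-simple. Hence $A$ is simultaneously simple and commutator-simple, which is the assertion.

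Since everything reduces to citations, the only point worth flagging is what a self-contained proof of the step $A\neq[A,A]$ would require, and that is where I would expect the real work to lie. The natural approach uses the $\mathbb{Z}$-grading of $A$ in which $x$ has degree $1$, $y$ has degree $-1$, and $F[z]$ sits in degree $0$; the bracket respects this grading, and from the identity $[z,fx^{n}]=n\lambda\,fx^{n}$ for $f\in F[z]$ (together with its analogue with $y$ in place of $x$) one sees that, for $n\neq 0$, every element of the degree-$n$ component already lies in $[A,A]$. Consequently $A=[A,A]$ would be equivalent to $F[z]=[A,A]\cap F[z]$, i.e.\ to $F[z]=\sum_{n\geq 1}[F[z]x^{n},F[z]y^{n}]$, and the content of \cite{FSS} is exactly that this fails when $\deg a\geq 2$ and $\sigma$ is a nontrivial shift (whereas for $\deg a=1$ one does obtain all of $F[z]$, which is why the ordinary Weyl algebra is excluded). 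Producing the linear functional on $F[z]$ that witnesses this proper inclusion — rather than any conceptual difficulty — is the one calculation a from-scratch proof would have to push through; given \cite{FSS}, it is already available.
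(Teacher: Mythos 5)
Your proposal matches the paper exactly: the paper gives no separate proof, instead deriving the proposition from Bavula's simplicity criterion, the result of Farinati--Solotar--Su\'arez-\'Alvarez that $A\ne[A,A]$, and the remark that a simple algebra is commutator-simple if and only if it differs from its commutator space. Your additional sketch of what a self-contained verification of $A\ne[A,A]$ would require goes beyond what the paper records, but the argument as assembled is the same.
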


Algebras from the next example are not simple.

\begin{proposition} Let $F$ be with $\operatorname{char}(F)=0$ and
 let $A=F\langle X\rangle$ be a free algebra on at least two indeterminates. 
Then $[A,A]$ does not contain nonzero subalgebras. In particular, $A$ is commutator-simple.
\end{proposition}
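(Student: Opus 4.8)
The plan is to show that a nonzero homogeneous element of $[A,A]$ cannot generate a subalgebra lying inside $[A,A]$, by exploiting the grading of the free algebra $A=F\langle X\rangle$ by total degree. Write $A=\bigoplus_{n\ge 0}A_n$, where $A_n$ is spanned by the words of length $n$. Since $[A,A]$ is a graded subspace, it suffices to prove that $[A,A]$ contains no nonzero homogeneous subalgebra, and for this the key point is to understand $A_n\cap[A,A]$ concretely. A standard fact (coming from the Poincaré--Birkhoff--Witt / Lyndon basis picture, valid in characteristic $0$) is that the quotient $A_n/(A_n\cap[A,A])$ has a basis given by the \emph{cyclic equivalence classes} of words of length $n$: two words are identified when one is a cyclic rotation of the other, and $[A,A]\cap A_n$ is exactly the span of all differences $u v - v u$ with $u,v$ words and $|u|+|v|=n$, equivalently the span of $w - w'$ whenever $w'$ is a cyclic rotation of $w$. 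In particular, for $n\ge 1$ the word $x^n$ (for $x\in X$) is fixed by every cyclic rotation, so its class is nonzero in the quotient; hence $x^n\notin[A,A]$ for all $n\ge 1$.

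With this in hand, suppose $B\subseteq[A,A]$ is a nonzero subalgebra; I first reduce to the homogeneous case. Pick $0\ne b\in B$ and write $b=b_m+b_{m+1}+\dots+b_M$ with $b_m\ne 0\ne b_M$, $m\le M$. Replacing $b$ by a high power $b^k$ and extracting top-degree components, one sees $b_M^{\,k}\in A_{kM}$; but $b_M^{\,k}$ need not lie in $B$. Instead I would argue directly: the bottom component map $b\mapsto b_m$ (lowest nonzero degree) is not multiplicative, so a cleaner route is to use that $[A,A]$ is graded together with the observation that for any nonzero $b\in A$ there exists a substitution $X\to F$-linear-combinations, or better an evaluation, detecting that some power of $b$ has a nonzero ``trace''. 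Concretely: define on each $A_n$ the linear functional $\tau_n$ sending a word to $1$ if it is $x^n$ for a fixed chosen variable $x$ reading left to right after a suitable substitution — this is delicate. The robust alternative, which I expect to use, is: for $b\ne 0$ homogeneous of degree $d\ge1$, specialize all but one variable to $0$ and the remaining variable to itself, or specialize variables to generic scalars to reduce to a polynomial identity question; since $A$ has no nonzero nil elements (it embeds in a matrix algebra over a field after a generic specialization, by Amitsur or by the Magnus embedding into formal power series), some power $b^k$ has nonzero image, and one computes that this image is a nonzero scalar multiple of a power $x^{kd}$-type term lying outside $[A,A]$.

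Let me streamline to the argument I would actually write. Step 1: record the cyclic-word description of $[A,A]\cap A_n$ and deduce $x^n\notin[A,A]$ for $n\ge1$. Step 2: let $B\subseteq[A,A]$ be a subalgebra and suppose $0\ne b\in B$; let $d$ be the top degree of $b$ with top component $b_d$. Since the top-degree component of a product is the product of top-degree components, $b^k$ has top component $b_d^{\,k}\in A_{kd}$, and $b_d^{\,k}\ne 0$ because the associated graded of the free algebra is again free (an integral-domain-like argument: $A$ itself is a domain, so $b_d\ne0\Rightarrow b_d^k\ne0$). Step 3: project $b^k$ onto degree $kd$: this projection $\pi_{kd}(b^k)=b_d^{\,k}$ must lie in $[A,A]\cap A_{kd}$, since $[A,A]$ is graded and $b^k\in B\subseteq[A,A]$. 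Step 4: now choose, within the variable $b_d$, a monomial $w$ appearing with nonzero coefficient; the word $w^k$ appears in $b_d^k$. The obstruction — and the step I expect to be the crux — is showing that after passing to cyclic-equivalence classes, the class of $w^k$ (or of some explicitly chosen word in $b_d^k$) survives, i.e. $b_d^k\notin[A,A]$; this is false for a general $b_d$ but can be forced by first applying an automorphism of $A$ (a linear change of the indeterminates) so that $b_d$ involves the monomial $x_1^d$ with nonzero coefficient — generic linear substitutions achieve this because the top-degree form, viewed as a noncommutative polynomial, does not vanish under generic commuting specialization-plus-ordering. Then $b_d^k$ contains $x_1^{kd}$ with nonzero coefficient and, $x_1^{kd}$ being cyclically rigid and not cyclically equivalent to any other word, $b_d^k\notin[A,A]$, a contradiction. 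Hence $B=0$, and commutator-simplicity of $A$ follows since any ideal is in particular a subalgebra.
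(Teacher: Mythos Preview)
Your Steps~1--3 are fine: $[A,A]$ is graded, the top component $b_d$ of a nonzero $b\in B$ is a nonzero element of $[A,A]\cap A_d$, and since $A$ is a domain $b_d^{\,k}\ne 0$ is the degree-$kd$ component of $b^k\in[A,A]$, hence $b_d^{\,k}\in[A,A]$ for every $k$. The cyclic-word description of $[A,A]\cap A_n$ is also correct, and it does give $x^n\notin[A,A]$.

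The gap is in Step~4. You claim one can apply an invertible linear change of the generators so that the monomial $x_1^{\,d}$ occurs in (the transformed) $b_d$ with nonzero coefficient. But this is impossible precisely because $b_d\in[A,A]$. Under a substitution $x_i\mapsto\sum_j c_{ij}x_j$, the coefficient of $x_1^{\,d}$ in the image of a word $x_{i_1}\cdots x_{i_d}$ is $c_{i_1 1}\cdots c_{i_d 1}$; summing over the monomials of $b_d$ one gets exactly the \emph{abelianization} $\overline{b_d}$ evaluated at the first column $(c_{11},c_{21},\dots)$. Since the abelianization map $A\to F[X]$ kills $[A,A]$, we have $\overline{b_d}=0$ identically, so the coefficient of $x_1^{\,d}$ is zero for \emph{every} linear change of variables. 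Thus the proposed mechanism for forcing $b_d^{\,k}\notin[A,A]$ fails, and the argument stalls at the crucial point. (Your parenthetical justification, ``the top-degree form does not vanish under generic commuting specialization,'' is exactly what goes wrong: it \emph{does} vanish, for the reason just given.)

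What remains to be proved is therefore the statement: if $0\ne g\in A$ satisfies $g^k\in[A,A]$ for all $k\ge 1$, then $g=0$. This is the heart of the matter and is not obvious from the cyclic-word picture alone. The paper proves it by evaluating in matrix algebras: choose $n$ with $g$ not a polynomial identity of $M_n(F)$; then every evaluation of $g$ in $M_n(F)$ has all powers of trace zero, hence is nilpotent, so $g^n$ is a PI of $M_n(F)$; Amitsur's theorem (semiprimeness of T-ideals in characteristic~$0$) then forces $g$ itself to be a PI, a contradiction. You could splice this PI argument into your framework in place of Step~4; without it, the proof is incomplete.
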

\begin{proof} 
Suppose there exists a nonzero $f\in F\langle X\rangle$ such that $f^j\in [A,A]$ for every $j\ge 1$. 
By~\cite[Theorem~1.7.2]{GZ}, we can pick a positive integer $n$ such that $f=f(x_1,\dots,x_m)$ is not a polynomial identity 
of $M_n(F)$. For any $A_1,\dots,A_m\in M_n(F)$, $$f(A_1,\dots,A_m)^j\in [M_n(F),M_n(F)]$$ and therefore $f(A_1,\dots,A_m)^j$  has trace $0$. 
It is well known that this implies that the matrix $f(A_1,\dots,A_m)$ is nilpotent. This means that $f^n$ is a polynomial 
identity. However, this contradicts Amitsur's theorem~\cite[Theorem~1.12.4]{GZ}.
\end{proof}

Besides the direct product, we can also use the tensor product to obtain new examples of commutator-simple algebras.

\begin{proposition}\label{pten}
Let $A$ and $B$ be  commutator-simple unital algebras.
If $A$ is simple and  central, then the algebra $A\otimes_F B$ is commutator-simple. 
\end{proposition}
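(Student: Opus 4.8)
The plan is to combine a clean description of the commutator space of a tensor product with the classical normalization of ideals of $A\otimes_F B$ in the central simple case. First I would record the elementary identity
\[
[a\otimes b,\, a'\otimes b'] \;=\; [a,a']\otimes bb' \;+\; a'a\otimes[b,b'],
\]
valid for all $a,a'\in A$ and $b,b'\in B$, which gives $[A\otimes B, A\otimes B]\subseteq [A,A]\otimes B + A\otimes[B,B]$. The reverse inclusion holds as well, since $[a,a']\otimes b=[a\otimes b,\,a'\otimes 1]$ and $a\otimes[b,b']=[a\otimes b,\,1\otimes b']$, where unitality of both $A$ and $B$ is used. Hence
\[
[A\otimes_F B,\, A\otimes_F B] = [A,A]\otimes_F B + A\otimes_F[B,B].
\]

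Next I would invoke the standard fact that, since $A$ is simple with center $F$, every nonzero two-sided ideal $I$ of $A\otimes_F B$ contains a nonzero element of the form $1_A\otimes b$ with $b\in B$. One takes $0\ne u=\sum_{i=1}^n a_i\otimes b_i\in I$ with $n$ as small as possible, so that both families $(a_i)$ and $(b_i)$ are linearly independent; because $Aa_1A=A$ one may replace $u$ by an element of $I$ whose first summand is $1_A\otimes b_1$; commuting this element with $a\otimes 1$ for $a\in A$ yields an element of $I$ with strictly fewer terms, hence zero by minimality, and linear independence of $b_2,\dots,b_n$ forces $a_2,\dots,a_n\in Z(A)=F\cdot 1_A$; collecting terms produces $1_A\otimes b\in I$ with $b\ne 0$.

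Now suppose $I$ is a nonzero ideal of $A\otimes B$ with $I\subseteq[A\otimes B, A\otimes B]$. By the previous step $1_A\otimes b\in I$ for some $b\ne 0$, whence $A\otimes_F J\subseteq I$, where $J=BbB$ is the nonzero ideal of $B$ generated by $b$. Thus $A\otimes_F J\subseteq [A,A]\otimes B + A\otimes[B,B]$ by the first step. Applying the linear map $q_A\otimes q_B\colon A\otimes B\to (A/[A,A])\otimes(B/[B,B])$ induced by the two quotient maps, which annihilates both $[A,A]\otimes B$ and $A\otimes[B,B]$, we obtain $(A/[A,A])\otimes q_B(J)=0$. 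Since $A$ is simple and commutator-simple we have $A\ne[A,A]$, i.e.\ $A/[A,A]\ne 0$; as the ground ring is a field, this forces $q_B(J)=0$, that is $J\subseteq[B,B]$. But $J$ is a nonzero ideal of the commutator-simple algebra $B$, a contradiction. Therefore $A\otimes B$ is commutator-simple.

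The only substantial point is the ideal normalization in the second step, which must be carried out (or cited) with care since $A$ need not be finite-dimensional; the first and third steps are short once that is available.
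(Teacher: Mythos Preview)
Your proof is correct. The overall strategy matches the paper's: use simplicity and centrality of $A$ to normalize a nonzero element of a putative ideal $I\subseteq[T,T]$, thereby forcing a nonzero ideal $J$ of $B$ into $[B,B]$. The execution differs in two respects worth noting. First, you begin by establishing the clean identity $[A\otimes B,A\otimes B]=[A,A]\otimes B+A\otimes[B,B]$ and then finish with the quotient map $q_A\otimes q_B$; the paper never writes this formula down, instead expanding $a\otimes b=\sum_k[x_k\otimes z_k,y_k\otimes w_k]$ by hand and arguing directly via linear independence in $A\otimes B$. Second, for the normalization step you reduce to $1_A\otimes b\in I$ by the standard minimal-length argument, whereas the paper invokes the Artin--Whaples density theorem to land on $a\otimes v_1\in I$ for a prescribed $a\in A\setminus[A,A]$, front-loading the choice of $a$ so that the endgame is a one-line contradiction. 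Your route is a bit more structural and reusable (the $[T,T]$ formula is a lemma in its own right); the paper's route avoids stating that formula at the cost of a slightly longer final computation. Either is fine.
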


\begin{proof}
Set $T=A\otimes_F B$. Suppose $[T,T]$ contains a nonzero ideal $I$. Take a nonzero
\[
w=u_1\otimes v_1 + \dots + u_m\otimes v_m\in I.
\]  
Without loss of generality, we may assume that $u_1,\dots,u_m$ are linearly independent and $v_1\ne 0$.  
Pick $a\in A\setminus [A,A]$. By the Artin-Whaples Theorem (see~\cite[Corollary~5.24]{INCA}), there exist 
$s_i, t_i\in A$ such that
\[
\sum_i s_i u_1 t_i = a\quad\mbox{and}\quad \sum_i s_i u_j t_i = 0,\,\, j=2,\dots,m.
\]
This gives
\[
a\otimes v_1 = \sum_i s_i\otimes 1 \cdot w  \cdot t_i\otimes 1 \in I,
\]
which readily implies that $a\otimes J\subseteq I$ where $J$ is the ideal of $B$ generated by~$v_1$. By assumption, there 
exists a $b\in J\setminus [B,B]$. Since $a\otimes b\in I\subseteq [T,T]$, there exist $x_k,y_k\in A$ and $z_k,w_k\in B$ such that
\[
a\otimes b = \sum_k [x_k\otimes z_k, y_k\otimes w_k] = \sum_k x_k y_k\otimes z_kw_k - y_k x_k\otimes w_kz_k,
\]
which can be written as
\[
a\otimes b +\sum_k [y_k,x_k] \otimes z_k w_k = \sum_k y_kx_k\otimes [z_k,w_k]. 
\]
Considering $\sum_k [y_k,x_k] \otimes z_k w_k$ as an element of the vector space $[A,A]\otimes B$, we can rewrite it as
$\sum_\ell c_l\otimes d_l$ for some linearly independent $c_\ell \in [A,A]$ and some $d_\ell\in B$. Since $a\notin [A,A]$, 
$a$ does not lie in the linear span of the elements $c_\ell$.  Therefore,
\[
a\otimes b +\sum_\ell c_l\otimes d_l = \sum_k y_kx_k\otimes [z_k,w_k] 
\]
implies that $b$ lies in the linear span of the elements $[z_k,w_k]$ (see~\cite[Lemma~4.9]{INCA}). However, 
this is a contradiction since  $b\notin [B,B]$.  
\end{proof}

Taking $A$ to be the matrix algebra $M_n(F)$, we obtain the following.

\begin{corollary}
If $B$ is a commutator-simple unital algebra, then so is $M_n(B)$.
\end{corollary}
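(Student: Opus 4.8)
The plan is to recognise the corollary as the special case $A = M_n(F)$ of Proposition~\ref{pten}. The first step is to record the standard $F$-algebra isomorphism $M_n(B) \cong M_n(F)\otimes_F B$, under which a matrix $(b_{ij})$ corresponds to $\sum_{i,j} e_{ij}\otimes b_{ij}$, where the $e_{ij}$ are the matrix units. Thus it suffices to show that $M_n(F)\otimes_F B$ is commutator-simple.

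Next I would check that $A = M_n(F)$ satisfies the hypotheses of Proposition~\ref{pten}. It is unital, with unit the identity matrix. It is simple: this is classical and also follows from the usual two-sided ideal computation with matrix units. Its centre consists precisely of the scalar matrices, hence equals $F$, so $A$ is central. Finally, $A$ is commutator-simple: for $n=1$ this is trivial, while for $n\ge 2$ one has $[M_n(F),M_n(F)] = \{x\in M_n(F) : \tr(x)=0\}\ne M_n(F)$, so the simple algebra $M_n(F)$ is not equal to its commutator space and is therefore commutator-simple by the remark preceding Proposition~\ref{p1}; alternatively, this is immediate from Proposition~\ref{frank}, since $M_n(F)$ is the algebra of all (necessarily finite rank) linear operators on the $n$-dimensional space $F^n$.

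With all the hypotheses in place, Proposition~\ref{pten} applied to $A=M_n(F)$ and the given commutator-simple unital algebra $B$ yields that $M_n(F)\otimes_F B$, and hence $M_n(B)$, is commutator-simple. I do not expect a genuine obstacle here: all the substance is contained in Proposition~\ref{pten}, and this argument merely amounts to verifying the standard structural facts about $M_n(F)$ and invoking the tensor-product description of $M_n(B)$. The only point worth flagging is the degenerate case $n=1$, where $M_1(B)=B$ and the conclusion holds by hypothesis; for $n\ge 2$ the argument above applies verbatim.
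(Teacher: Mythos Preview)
Your argument is correct and is exactly the approach the paper takes: the corollary is obtained by applying Proposition~\ref{pten} with $A=M_n(F)$, using the standard identification $M_n(B)\cong M_n(F)\otimes_F B$. Your verification of the hypotheses on $M_n(F)$ (unital, simple, central, commutator-simple) is accurate and more detailed than what the paper spells out.
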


We now turn our attention to Banach algebras. We start with some of those in which the argument involving the trace is applicable.

\begin{proposition}\label{cfrank}
Let $X$ be a normed space.
Then the algebra $F(X)$ of all continuous finite rank linear operators on $X$ is commutator-simple.
\end{proposition}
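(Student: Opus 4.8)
The plan is to mimic the proof of Proposition~\ref{frank}, replacing the algebraic rank-one operators by continuous ones and producing a trace functional on $F(X)$. First I would recall that every continuous finite rank operator $x$ on a normed space $X$ can be written as a finite sum $x=\sum_{i}\xi_i\otimes f_i$ where $\xi_i\in X$ and $f_i\in X^*$ (the dual space), with $(\xi\otimes f)(\eta)=f(\eta)\xi$. One then defines the trace $\tau(x)=\sum_i f_i(\xi_i)$; the standard argument shows this is well defined (independent of the representation) and that $\tau(xy)=\tau(yx)$, so $\tau$ satisfies~\eqref{eq1}. Note that $\tau$ lives on $F(X)$ itself, which already equals its own square, so there is no issue about the domain $A^2$.

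Next I would verify~\eqref{eq2}. Given a nonzero $x\in F(X)$, pick $\eta\in X$ with $x(\eta)\ne 0$ and, by Hahn--Banach, a continuous functional $f\in X^*$ with $f(x(\eta))\ne 0$. Then $\eta\otimes f$ is a continuous finite rank operator and
\[
\tau\bigl(x(\eta\otimes f)\bigr)=\tau\bigl((x\eta)\otimes f\bigr)=f(x(\eta))\ne 0,
\]
so $\tau(xF(X))\ne\{0\}$. Hence $\tau$ satisfies~\eqref{eq2}, and by the remark preceding Proposition~\ref{frank} the algebra $F(X)$ is commutator-simple.

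The only genuine point of care, as opposed to the purely algebraic case, is that all the operators used in checking~\eqref{eq2} must be \emph{continuous} finite rank operators; this is exactly where Hahn--Banach enters, guaranteeing a continuous separating functional $f$. Everything else — well-definedness of $\tau$ and the trace property — is the familiar linear-algebra computation and I would not belabour it. I do not anticipate a substantial obstacle here; the proposition is essentially the normed-space reading of Proposition~\ref{frank}, and the proof is correspondingly short.
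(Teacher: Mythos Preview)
Your proposal is correct and follows essentially the same approach as the paper: define the natural trace on $F(X)$, note that $\tau\bigl(x(\eta\otimes f)\bigr)=f(x(\eta))$, and invoke Hahn--Banach to produce a continuous separating functional so that condition~\eqref{eq2} holds. The paper's proof is terser but identical in substance, explicitly flagging Hahn--Banach as the one additional ingredient beyond Proposition~\ref{frank}.
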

\begin{proof}
As in Proposition~\ref{frank} this algebra has a trace $\tau$,
and for any continuous linear operator $x$ on $X$, we have 
$\tau\bigl(x(\xi\otimes f)\bigr)=f(x(\xi))$
for each  $\xi\in X$ and each continuous linear functional $f$ on $X$.
It follows from the Hahn-Banach theorem  that $\tau$ satisfies~\eqref{eq2}.
\end{proof}

\begin{proposition}
Let $H$ be a complex Hilbert space. Then each of the following operator algebras is commutator-simple:
\begin{enumerate}
\item[(i)]
	The algebra $S^1(H)$ of all trace class operators on $H$;
\item[(ii)]
	The algebra $S^2(H)$ of all Hilbert-Schmidt operators on $H$.
\end{enumerate}	
\end{proposition}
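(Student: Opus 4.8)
The plan is to apply, in both cases, the trace criterion \eqref{eq1}--\eqref{eq2}, exactly as in the proofs of Propositions~\ref{frank} and~\ref{cfrank}. The only new point is to check that the trace is available as a linear functional on $A^2$ in the Hilbert--Schmidt case, where it is not defined on all of $A$.

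For (i), let $A=S^1(H)$ and take $\tau=\tr$, the ordinary trace, which is a well-defined linear functional on all of $S^1(H)$ and hence on $A^2\subseteq S^1(H)$. The cyclicity identity $\tr(xy)=\tr(yx)$ for $x,y\in S^1(H)$ is standard, so \eqref{eq1} holds. For \eqref{eq2}, if $x\in S^1(H)$ satisfies $\tr(xy)=0$ for all $y\in A$, then applying this to the rank-one operators $\xi\otimes\zeta\colon\eta\mapsto\langle\eta,\zeta\rangle\xi$ (which lie in $S^1(H)$) and using $\tr\bigl(x(\xi\otimes\zeta)\bigr)=\langle x\xi,\zeta\rangle$ forces $x=0$. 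Hence $S^1(H)$ is commutator-simple.

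For (ii), let $A=S^2(H)$. Since the product of two Hilbert--Schmidt operators is trace class, we have $A^2\subseteq S^1(H)$ (in fact $A^2=S^1(H)$: by the polar decomposition every $t\in S^1(H)$ factors as $t=(u|t|^{1/2})(|t|^{1/2})$ with both factors in $S^2(H)$), so $\tau=\tr$ restricts to a well-defined linear functional on $A^2$. Relation \eqref{eq1} then follows from $\tr(xy)=\tr(yx)$ for $x,y\in S^2(H)$, and \eqref{eq2} is verified just as in (i), using that the rank-one operators $\xi\otimes\zeta$ belong to $S^2(H)$ and that $\tr\bigl(x(\xi\otimes\zeta)\bigr)=\langle x\xi,\zeta\rangle$ for $x\in S^2(H)$.

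There is no genuine obstacle here: the argument is a routine transcription of Propositions~\ref{frank} and~\ref{cfrank}, and the only thing demanding a moment's attention is confirming that in case (ii) the trace makes sense on the subspace $A^2$ of $S^1(H)$ that the criterion \eqref{eq1}--\eqref{eq2} requires it to live on.
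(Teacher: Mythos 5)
Your proof is correct and follows essentially the same route as the paper's: the paper likewise takes $\tau$ to be the natural trace, invokes the standard facts that $S^2(H)\cdot S^2(H)\subseteq S^1(H)$ and $\tau(xy)=\tau(yx)$ for Hilbert--Schmidt $x,y$ to get \eqref{eq1} in both cases, and verifies \eqref{eq2} by evaluating $\tau$ on rank-one operators. Your extra remark that $A^2=S^1(H)$ via polar decomposition is not needed but is harmless.
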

\begin{proof}
Let $\tau$ be the natural trace on the trace class operators (see~\cite[Definition~9.1.34]{Pal}).
By~\cite[Theorem~9.1.35]{Pal}, $F(H)\subset S^1(H)\subset S^2(H)$ and
$\tau(xy)=\tau(yx)$ for all $x$, $y\in S^2(H)$. This shows that $\tau$ satisfies~\eqref{eq1} in both cases (i) and (ii). Further,
\[
\tau\bigl(x(\xi\otimes\eta^*)\bigr)=
\tau\bigl(\xi(x)\otimes\eta^*\bigr)=
\langle\xi(x)\vert\eta\rangle
\]
for each continuous linear operator $x$ and all $\xi,\eta\in H$, which
gives~\eqref{eq2}.
\end{proof}

Let $G$ be a locally compact group.
We write $L^1(G)$ for the usual Banach $L^1$-space with respect to the left Haar measure on $G$. It becomes a Banach algebra with respect to the convolution product defined by
\[
(f\ast g)(t)=\int_Gf(s)g(s^{-1}t)\,ds.
\]
If $G$ is a discrete group, then the Haar measure is the counting measure, and
the corresponding group algebra is usually written as $\ell^1(G)$. This algebra
coincides with the purely algebraic group algebra $\mathbb{C}[G]$ in the case where $G$ is a finite group.
The group $G$ is called \emph{unimodular} if the left Haar measure is also  a right Haar measure.
This is a very important and wide class of groups, which includes 
Abelian groups, compact groups, discrete groups, and many others 
(we refer the reader to~\cite[Chapter12]{Pal} for examples and a thorough
discussion of this class of groups).
We confine our  attention to unimodular locally compact groups.
In this case we have
\[
\int_G f(t^{-1})\, dt=\int_G f(t)\,dt
\]
for each $f\in L^1(G)$, and
the group algebra $L^1(G)$ turns into a Banach $\star$-algebra by defining
\[
f^\star(t)=\overline{f(t^{-1})} 
\]
for all $ f\in L^1(G)$ and $t\in G$.

\begin{proposition}\label{g1}
Let $G$ be a unimodular locally compact group. 
Then each of the following subalgebras of the group algebra $L^1(G)$ is commutator-simple:
\begin{enumerate}
\item[(i)] 
The algebra $C_{00}(G)$ of continuous functions on $G$ of compact support.
\item[(ii)]
The algebra $L^1(G)\cap L^2(G)$.
\end{enumerate}
\end{proposition}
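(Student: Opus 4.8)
The plan is to treat both cases at once by producing a trace functional $\tau$ satisfying \eqref{eq1} and \eqref{eq2}, in the spirit of Propositions~\ref{frank} and~\ref{cfrank}. Write $A$ for either $C_{00}(G)$ or $L^1(G)\cap L^2(G)$. First one checks that $A$ is a subalgebra of $L^1(G)$: for $C_{00}(G)$ one uses $\supp(f\ast g)\subseteq(\supp f)(\supp g)$ together with the continuity of $f\ast g$, and for $L^1(G)\cap L^2(G)$ one combines $\|f\ast g\|_1\le\|f\|_1\|g\|_1$ with Young's inequality $\|f\ast g\|_2\le\|f\|_1\|g\|_2$. The key structural point is that every element of $A^2$ has a continuous representative: $C_{00}(G)\ast C_{00}(G)\subseteq C_{00}(G)$, and the convolution of two functions in $L^2(G)$ is continuous, since for $t$ near $t_0$ one has $|(f\ast g)(t)-(f\ast g)(t_0)|\le\|f\|_2\,\|\rho_t g-\rho_{t_0}g\|_2$, where $\rho$ denotes right translation, which is isometric on $L^2(G)$ by unimodularity and strongly continuous. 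Consequently $\tau(h):=h(e)$ is a well-defined linear functional on $A^2$, the value being independent of the representative because two continuous functions agreeing almost everywhere agree everywhere.

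Next one verifies \eqref{eq1}. For $f,g\in A$ we have $(f\ast g)(e)=\int_G f(s)g(s^{-1})\,ds$; performing the substitution $s\mapsto s^{-1}$ and using $\int_G\varphi(s^{-1})\,ds=\int_G\varphi(s)\,ds$, which is valid because $G$ is unimodular, yields $(f\ast g)(e)=\int_G f(s^{-1})g(s)\,ds=(g\ast f)(e)$, that is, $\tau(fg)=\tau(gf)$.

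Finally, for \eqref{eq2}, assume $f\in A$ satisfies $\tau(f\ast g)=0$ for all $g\in A$, i.e.\ $\int_G f(s)g(s^{-1})\,ds=0$ for all $g\in A$. The map $g\mapsto\check g$, where $\check g(s):=g(s^{-1})$, is a bijection of $A$ onto itself (it preserves the $L^1$- and $L^2$-norms by unimodularity and maps $C_{00}(G)$ onto $C_{00}(G)$), so the hypothesis becomes $\int_G f(s)h(s)\,ds=0$ for every $h\in A$. Since $C_{00}(G)\subseteq A$ in both cases, this forces $f=0$ (almost everywhere, hence everywhere in the $C_{00}(G)$ case). Thus $\tau$ satisfies \eqref{eq2}, and $A$ is commutator-simple.

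The only step that is not a routine change-of-variables computation is the observation that convolutions of $L^2$-functions are continuous, which is what makes $\tau$ well defined on $A^2$ in case~(ii); I would expect this, rather than the verification of \eqref{eq1} and \eqref{eq2}, to be where the details need most care.
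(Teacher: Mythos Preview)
Your proof is correct and follows essentially the same route as the paper: define $\tau(h)=h(e)$ on $A^2$, use unimodularity to verify the trace identity~\eqref{eq1}, and check faithfulness~\eqref{eq2}. The only cosmetic difference is in the verification of~\eqref{eq2}: the paper tests against the single function $g=f^\star$ to obtain $\tau(f\ast f^\star)=\int_G|f|^2=0$, whereas you pass through the bijection $g\mapsto\check g$ and invoke duality with $C_{00}(G)$; both arguments are standard and equally short. For case~(ii) the paper simply cites Folland for the continuity of $f\ast g$ with $f,g\in L^2(G)$, which is exactly the point you correctly singled out as the one requiring care.
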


\begin{proof}
Let $e$ be the identity of the group $G$.

(i)
It is easy to check that $C_{00}(G)$ is a $\star$-subalgebra of $L^1(G)$. 
We define a linear functional $\tau\colon C_{00}(G)\to\mathbb{C}$ by
\[
\tau(f)=f(e).
\]
Using the unimodularity of $G$, for each $f,g\in C_{00}(G)$, we have
\[
\begin{split}
\tau(f\ast g) & = (f\ast g)(e) =
\int_{G}f(t)g(t^{-1}e)\, dt =
\int_{G}f(t)g(t^{-1})\,dt \\
& = \int_Gf(t^{-1})g(t)\, dt=
\int_G g(t)f(t^{-1}e)\, dt=
(g\ast f)(e)=\tau(g\ast f),
\end{split}
\]
which shows that $\tau$ is a trace on $C_{00}(G)$. 
Further, suppose that $f\in C_{00}(G)$ is such that $\tau(f\ast g)=0$ for each $g\in C_{00}(G)$. We have
\[
0=\tau(f\ast f^\star)=
(f\ast f^\star)(e)=\int_G f(t)f^\star(t^{-1}e)\,dt=
\int_G\vert f(t)\vert^2\,dt,
\]
whence $f=0$.
	
(ii)
Write $A=L^1(G)\cap L^2(G)$.
Since $G$ is unimodular, \cite[Proposition~2.40]{F} shows that $A$ is a subalgebra of $L^1(G)$, and it is easily seen that it is $\star$-invariant.
Further,
if $f,g\in A$, then \cite[Proposition~2.41]{F} shows that $f\ast g$ is defined everywhere on $G$ 
and it is a continuous function vanishing at infinity. This implies that we can define a linear functional
$\tau\colon A^2\to\mathbb{C}$ by $\tau(f)=f(e)$ for each $f\in A^2$.
We can check that both~\eqref{eq1} and~\eqref{eq2} hold for $\tau$ as in~(i).
\end{proof}

\begin{proposition}\label{g2}
Let $G$ be a locally compact group.
Then the group algebra $L^1(G)$ is commutator-simple in each of the following cases:
\begin{enumerate}
\item[(i)]
The group $G$ has \emph{small invariant neighborhoods}, i.e.,
every neighborhood of the identity contains a compact neighborhood of the identity which is invariant under all inner automorphisms;

\item[(ii)]
The group $G$ is \emph{maximally almost periodic}, i.e.,
the are enough continuous finite-dimensional unitary representations to separate the points of $G$.

\end{enumerate}
\end{proposition}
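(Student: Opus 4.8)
The plan is, in both cases, to produce a sufficiently rich supply of \emph{continuous traces} on $L^1(G)$ — bounded linear functionals $\tau$ with $\tau(f\ast g)=\tau(g\ast f)$ — and to use that every such $\tau$ automatically annihilates $[L^1(G),L^1(G)]$, hence any ideal contained in it; the ideal property will then turn this vanishing into a contradiction. Observe first that groups in either class are unimodular: for a SIN group because a conjugation-invariant neighbourhood $V$ of $e$ satisfies $\Delta(g)^{-1}|V|=|gVg^{-1}|=|V|$, and for a maximally almost periodic group as a standard fact. In particular, convolution of two functions from $L^1(G)\cap L^2(G)$ is a bounded continuous function, which will be used below.

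For part~(ii) I would take the traces $\tau_\pi:=\tr\circ\pi$, where $\pi$ runs over the continuous finite-dimensional unitary representations of $G$; each is bounded, since $|\tau_\pi(f)|\le(\dim\pi)\|f\|_1$, and tracial, since $\tr(\pi(f)\pi(g))=\tr(\pi(g)\pi(f))$. If $I\subseteq[L^1(G),L^1(G)]$ is an ideal and $w\in I$, then $f\ast w\ast g\in I$ for all $f,g\in L^1(G)$, so $\tr\bigl(\pi(f)\pi(w)\pi(g)\bigr)=\tau_\pi(f\ast w\ast g)=0$. When $\pi$ is irreducible, $\pi(L^1(G))$ acts irreducibly on the representation space — a $\pi(L^1(G))$-invariant subspace is $\pi(G)$-invariant, since $\pi(t)=\lim\pi(u_\alpha)$ along an approximate identity — hence $\pi(L^1(G))$ equals the full matrix algebra by Burnside's theorem; nondegeneracy of the trace form then gives $\pi(w)=0$, and by passing to direct sums $\pi(w)=0$ for \emph{every} finite-dimensional unitary $\pi$. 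Maximal almost periodicity is what lets one conclude $w=0$: the canonical map $\iota\colon G\to bG$ into the Bohr compactification is injective, the continuous finite-dimensional unitary representations of $G$ factor through $\iota$ and so correspond to those of the compact group $bG$, and $f\mapsto\iota_*(f\,dt)$ is an injective homomorphism of $L^1(G)$ into the measure algebra of $bG$; since the irreducible representations of a compact group are jointly faithful on its measure algebra (Peter--Weyl together with Stone--Weierstrass), $w=0$. Hence $I=\{0\}$.

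For part~(i) I would exploit that a SIN group has a bounded central approximate identity: letting $(U_\alpha)$ run through the compact conjugation-invariant neighbourhoods of $e$ and $u_\alpha$ be the corresponding normalized indicator functions, each $u_\alpha$ is a bounded class function with $\|u_\alpha\|_1=1$, $u_\alpha\ast f=f\ast u_\alpha$, and $u_\alpha\ast f\to f$. Since $u_\alpha$ is a bounded class function, $\tau_\alpha(f):=\int_G fu_\alpha\,dt$ is a continuous trace on $L^1(G)$. Now let $I\subseteq[L^1(G),L^1(G)]$ be a nonzero ideal and $0\ne w\in I$. Then $u_\alpha\ast w\ast u_\alpha=(u_\alpha\ast u_\alpha)\ast w$ belongs to $I$, converges to $w$, and is continuous because $u_\alpha\in L^1(G)\cap L^2(G)$; so for suitable $\alpha$ it is a nonzero continuous function $w_1\in I$. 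Picking $s$ with $w_1(s)\ne0$ and convolving $w_1$ on the left by translates of $u_\beta$ concentrated near $s^{-1}$ — still in $I$, still continuous — yields a continuous $w_2\in I$ with $w_2(e)\ne0$. But $w_2\in I\subseteq[L^1(G),L^1(G)]\subseteq\ker\tau_\gamma$ for every $\gamma$, so $\int_G w_2u_\gamma\,dt=0$ for all $\gamma$, and letting $U_\gamma\downarrow\{e\}$ forces $w_2(e)=0$ by continuity of $w_2$ at $e$, a contradiction. Hence $I=\{0\}$.

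The main obstacle is the last step of part~(ii). The vanishing of $\pi(w)$ for all finite-dimensional unitary $\pi$ only says that $w$ is annihilated by the almost periodic functions under the $L^1$--$L^\infty$ pairing, and for a general locally compact group these are very far from being weak-$*$ dense in $L^\infty(G)$; it is exactly the injectivity of $\iota\colon G\to bG$ that rescues the argument. Making this rigorous requires checking that $f\mapsto\iota_*(f\,dt)$ is injective on $L^1(G)$, which in turn rests on $\iota$ being a Borel isomorphism onto its (Borel) image after reducing to the second countable case. Part~(i) carries no such conceptual difficulty, but one must keep every smoothing step inside $I$ — which works precisely because $I$ is an ideal and the smoothing factors belong to $L^1(G)$ — and justify the passage from the functional identity $\tau_\gamma(w_2)=0$ to the pointwise identity $w_2(e)=0$, which is exactly what the central approximate identity makes possible.
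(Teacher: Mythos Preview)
Your argument is correct in both parts, but the routes differ from the paper's in interesting ways.

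For part~(ii) the overall strategy coincides with the paper's: use the traces $\tau_\pi=\tr\circ\pi$ for finite-dimensional unitary $\pi$, deduce $\pi(w)=0$ for every such $\pi$, and then invoke the MAP hypothesis to get $w=0$. The execution differs. To obtain $\pi(f)=0$ the paper simply observes that $f\ast f^\star\in I$, so $0=\tau_\pi(f\ast f^\star)=\tr\bigl(\pi(f)\pi(f)^*\bigr)$, whence $\pi(f)=0$ by positivity; this replaces your detour through irreducibility and Burnside's theorem in one line. For the final step the paper just asserts ``since $G$ is an MAP-group, it may be concluded that $f=0$'', treating the joint faithfulness of the finite-dimensional unitary representations on $L^1(G)$ as a standard fact, whereas you (correctly) flag it as the delicate point and sketch the Bohr-compactification justification. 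So your version is more scrupulous about what is being quoted, but less economical in getting to $\pi(w)=0$.

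For part~(i) your approach is genuinely different. The paper does not build traces on $L^1(G)$ at all; instead it uses the central approximate identity $(e_\lambda)$ to push the problem into the subalgebra $L^1(G)\cap L^2(G)$: centrality gives $I\ast e_\lambda\ast e_\lambda\subseteq[L^1(G)\ast e_\lambda,\,L^1(G)\ast e_\lambda]\subseteq[L^1\cap L^2,\,L^1\cap L^2]$, and since $I\ast e_\lambda\ast e_\lambda$ is an ideal of $L^1\cap L^2$, Proposition~\ref{g1}(ii) (already proved) forces it to vanish; taking the limit gives $I=\{0\}$. Your argument is self-contained---it does not appeal to Proposition~\ref{g1}---and instead extracts traces $\tau_\alpha(f)=\int f\,u_\alpha$ from the class functions $u_\alpha$, smooths an element of $I$ to a continuous function, and reads off a contradiction at the identity. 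Both arguments rest on exactly the same structural input (a bounded central approximate identity in $L^1\cap L^2$); the paper's reduction is shorter given what has already been established, while yours makes the role of the trace at $e$ explicit and would stand on its own without Proposition~\ref{g1}.
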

\begin{proof}
(i)
Let $(e_\lambda)_{\lambda\in\Lambda}$ be a bounded approximate identity for $L^1(G)$ consisting of 
central  elements of $L^1(G)$ and such that $e_\lambda\in L^1(G)\cap L^2(G)$ for each $\lambda\in\Lambda$ 
(see~\cite[Page~530]{Pal1}).

Suppose that $I$ is an ideal of $L^1(G)$ with $I\subseteq[L^1(G),L^1(G)]$.
Then, for each $\lambda\in\Lambda$,
\[
L^1(G)\ast e_\lambda\subseteq L^1(G)\cap L^2(G)
\]
and so
\[
I\ast e_\lambda\ast e_\lambda\subseteq [L^1(G)\ast e_\lambda,L^1(G)\ast e_\lambda]
\subseteq [L^1(G)\cap L^2(G),L^1(G)\cap L^2(G)].
\]
Further, $I\ast e_\lambda\ast e_\lambda$ is an ideal of $L^1(G)\cap L^2(G)$. From Proposition~\ref{g1} 
we see that $I\ast e_\lambda\ast e_\lambda=\{0\}$. 
Finally, since $(e_\lambda)_{\lambda\in\Lambda}$ is a bounded approximate identity for $L^1(G)$, for each $f\in I$ we have
\[
f=\lim_{\lambda\in\Lambda} \underbrace{f\ast e_\lambda\ast e_\lambda}_{=0}=0.
\]

(ii)
Suppose that $I$ is an ideal of $L^1(G)$ such that $I\subseteq [L^1(G),L^1(G)]$,
and take $f\in I$.

Let $\pi$ be a continuous, unitary representation of $G$ on a finite-dimensional Hilbert space $H$. 
Then $\pi$ gives rise to a representation of $L^1(G)$ on the algebra $B(H)$ of all continuous linear operators on $H$, still denoted by $\pi$, defined by
\[
\pi(g)=\int_G f(t)\pi(t)\, dt 
\]
for all $g\in L^1(G)$.
Since $H$ is finite-dimensional we can define a linear functional $\tau_\pi\colon L^1(G)\to\mathbb{C}$ by
\[
\tau_\pi(g)=\operatorname{trace}(\pi(g))
\]
for all $ g\in L^1(G)$.
It is immediate to check that $\tau_\pi$ is a trace.
Consequently, $\tau_\pi(I)=\{0\}$ and hence
\[
0=\tau_\pi(f\ast f^\star)=\operatorname{trace}(\pi(f)\pi(f)^*),
\]
which implies that $\pi(f)=0$.

We thus get $\pi(f)=0$ for each finite-dimensional, continuous, unitary representation $\pi$ of $G$,
and since $G$ is an MAP-group, it may be concluded that $f=0$.
\end{proof}

For a comprehensive treatment of the classes of groups, [SIN] and [MAP], appearing in Proposition~\ref{g2}, 
we refer the reader to \cite[Chapter~12]{Pal}. It should be pointed out that all of these groups are unimodular.
Further, 
each discrete group $G$ has small invariant neighborhoods and therefore the algebra $\ell^1(G)$ is commutator-simple.

\section{Derivations}\label{s3}

Let $A$ be an algebra.
A linear map $D\colon A\to A$ is called a \emph{Jordan derivation} if it satisfies
\[
D(x^2)=D(x)x + xD(x)
\]
for all $x\in A$. If $A$ is a \emph{semiprime} algebra, i.e.,
$A$ has no nonzero nilpotent ideals, and the characteristic 
of the underlying field is not $2$,
every Jordan derivation is automatically a \emph{derivation}~\cite{C}, meaning that it satisfies 
\[
D(xy)=D(x)y + xD(y)
\]
for all $x,y\in A$. 

We remark that there are many algebras in which every derivation $D$ is of the form
\[
D(x)=[x,m]
\]
where $m$ is a fixed element from $A$, or, sometimes, from a larger algebra $M$ that contains $A$ as an ideal.
These are the derivations that are of particular interest to us. Observe that they satisfy
\begin{equation}\label{xnan}
D(x)x^n = [x,m]x ^n = [x,mx^n]\in [A,A]
\end{equation}
for all $x\in A$ and all positive integers $n$. We will actually need this only for $n=1$ and $n=2$.

We now state our basic result on derivations in commutator-simple algebras.  Its proof is short and uses an idea from~\cite{Bj}.

\begin{theorem}\label{l3}
Let $A$ be a commutator-simple semiprime algebra over a field $F$ with $\operatorname{char}(F)\ne 2$. If a linear map $D\colon A\to A$ satisfies 
\[
D(x)x, D(x)x^2\in [A,A]
\] 
for every $x\in A$, then $D$ is a  derivation.
\end{theorem}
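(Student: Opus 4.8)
The plan is to show that the two hypotheses force $D$ to be a \emph{Jordan} derivation, i.e.\ $D(x^2)=D(x)x+xD(x)$ for all $x\in A$, and then to invoke the result quoted just above --- that every Jordan derivation of a semiprime algebra over a field of characteristic $\ne2$ is a derivation \cite{C}. The first step is linearization. Replacing $x$ by $x+y$ in $D(x)x\in[A,A]$ and cancelling the two pure terms gives $D(x)y+D(y)x\in[A,A]$ for all $x,y\in A$. Doing the same with $D(x)x^2\in[A,A]$, removing the pure cubes $D(x)x^2$ and $D(y)y^2$, and then separating the component that is bihomogeneous of degree $(2,1)$ in $(x,y)$ from the one of degree $(1,2)$ --- which is legitimate since $\operatorname{char}F\ne2$, e.g.\ by also substituting $y\mapsto-y$ and taking sums and differences --- yields $D(x)xy+D(x)yx+D(y)x^2\in[A,A]$ for all $x,y\in A$.

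The heart of the argument is the following computation. Set $\delta(x)=D(x^2)-D(x)x-xD(x)$; the claim is that $\delta(x)y\in[A,A]$ for all $x,y\in A$. Starting from $\delta(x)y=D(x^2)y-D(x)xy-xD(x)y$, I would use the first linearized identity (applied to the pair $x^2,y$) to replace $D(x^2)y$ by $-D(y)x^2$ modulo $[A,A]$, and then the second linearized identity to replace $D(y)x^2+D(x)xy$ by $-D(x)yx$ modulo $[A,A]$. What remains is $D(x)yx-xD(x)y$, which is precisely the commutator $[D(x)y,x]$ and hence lies in $[A,A]$. This proves $\delta(x)A\subseteq[A,A]$ for every $x\in A$.

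It then remains to upgrade $\delta(x)A\subseteq[A,A]$ to $\delta(x)=0$. Writing $a\delta(x)=\delta(x)a+[a,\delta(x)]$ and $a\delta(x)b=\delta(x)(ba)+[a,\delta(x)b]$ for $a,b\in A$ shows that also $A\delta(x)\subseteq[A,A]$ and $A\delta(x)A\subseteq[A,A]$, so that $A\delta(x)+\delta(x)A+A\delta(x)A$ is a two-sided ideal of $A$ contained in $[A,A]$; since $A$ is commutator-simple, this ideal is $\{0\}$. In particular $\delta(x)A=A\delta(x)=\{0\}$, whence $\delta(x)^2=0$ and $F\delta(x)$ is a nilpotent ideal of $A$; as $A$ is semiprime, this forces $\delta(x)=0$. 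Thus $D$ is a Jordan derivation, and \cite{C} concludes that $D$ is a derivation.

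The step I expect to be most delicate is the central computation --- recognizing that the two linearized conditions conspire, up to the single extra commutator $[D(x)y,x]$, to push $\delta(x)y$ into $[A,A]$. The concluding passage from $\delta(x)A\subseteq[A,A]$ to $\delta(x)=0$ is routine, but some care is needed because $A$ need not be unital, so one argues through the ideal $A\delta(x)+\delta(x)A+A\delta(x)A$ rather than via a one-line trace or unit computation.
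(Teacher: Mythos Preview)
Your proof is correct and follows essentially the same route as the paper's: the same two linearizations, the same reduction of $\big(D(x^2)-D(x)x-xD(x)\big)y$ to the single commutator $[D(x)y,x]$, and the same conclusion via commutator-simplicity, semiprimeness, and Cusack's theorem. The only cosmetic difference is in the endgame---the paper shows directly that $A\delta(x)A\subseteq[A,A]$ and notes that $A\delta(x)A\ne\{0\}$ in a semiprime algebra, whereas you pass through the larger ideal $A\delta(x)+\delta(x)A+A\delta(x)A$ and then kill $\delta(x)$ itself via $\delta(x)^2=0$; both arguments are equivalent.
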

\begin{proof}
For any $x,y\in A$, we write 
 $x\equiv y$  if $ x-y\in [A,A].$ Our assumption can thus be read as $D(x)x\equiv 0$ and  $D(x)x^2\equiv 0$ for every $x\in A$. Writing $x+y$ for $x$ in the first relation we obtain
\begin{equation} 
D(x)y \equiv - D(y)x\label{31}
\end{equation}
for all $x,y\in A$, 
and writing $x\pm y$ for $x$ in the second relation, we obtain
\begin{equation} 
D(x)(xy+yx) + D(y)x^2  \equiv 0 \label{32}
\end{equation}
for all $x,y\in A$ (here we used the assumption that  $\operatorname{char}(F)\ne 2$).
In light of \eqref{31}, \eqref{32} can be written as 
\begin{equation} 
D(x)(xy+yx) -  D(x^2 )y \equiv 0. \label{33}
\end{equation}
Since 
\[
D(x)yx - xD(x)y = [D(x)y,x] \equiv 0,
\] 
\eqref{33} further gives
\begin{equation}
\big (D(x)x + xD(x) - D(x^2)\big)y \equiv 0. \label{34}
 \end{equation}
 
Suppose $D$ is not a  derivation.
By the aforementioned result, $D$  is neither a Jordan derivation, so
\[
a= D(x)x + xD(x) - D(x^2)\ne 0
\] 
for some $x\in A$.
By \eqref{34}, $ay\equiv 0$ for all $y\in A$. Since $zay = [z,ay] + a(yz)$ it follows that $zay\equiv 0$ for all $y,z\in A$. 
This means that $[A,A]$ contains the ideal $AaA$, which is nonzero since $A$ is semiprime. This contradicts our assumption that  
$A$ is commutator-simple.
\end{proof}

Theorem \ref{l3} is of course applicable to most algebras mentioned in Section~\ref{s2}, but  we will not list 
them here and rather focus on local derivations of group algebras $L^1(G)$. For this purpose, we need the following 
technical corollary to Theorem~\ref{l3}.

\begin{corollary}\label{p1209alternative}
	Let $A$  be a semiprime algebra over a field $F$ with $\operatorname{char}(F)\ne 2$. Suppose $A$ is an ideal of an algebra $M$ such that every derivation from $A$ to $A$ is of the form $x\mapsto [x,m]$ for some $m\in M$. Then every local derivation  $D\colon A\to A$ is a derivation on each commutator-simple ideal $I$ of $M$ contained in $A$.
\end{corollary}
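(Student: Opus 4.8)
The plan is to deduce the statement directly from Theorem~\ref{l3}, applied not to $D$ on all of $A$ but to the restriction of $D$ to $I$. The key observation is that, although $D$ is only assumed to map $A$ into $A$, on $I$ it is forced to take its values inside $I$ and to satisfy the two commutator conditions of that theorem. So first I would check that $D(I)\subseteq I$. Fix $x\in I$. Since $D$ is a local derivation there is a derivation $D_x\colon A\to A$ with $D(x)=D_x(x)$, and by hypothesis $D_x$ has the form $D_x(z)=[z,m_x]$ for some $m_x\in M$. Hence $D(x)=[x,m_x]=xm_x-m_xx$, and because $I$ is an ideal of $M$ we get $xm_x\in IM\subseteq I$ and $m_xx\in MI\subseteq I$, so $D(x)\in I$. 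Thus $D|_I$ is a well-defined linear self-map of $I$.

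Next I would verify that $I$ (an algebra over the same field $F$ with $\operatorname{char}(F)\ne2$) fulfils the hypotheses of Theorem~\ref{l3}. It is commutator-simple by assumption, and it is semiprime because an ideal of a semiprime algebra is again semiprime: if $B$ is an ideal of $I$ with $B^2=0$, then $BIB\subseteq B^2=\{0\}$, so $MBM$ is a nilpotent two-sided ideal of $M$ (one has $(MBM)^3\subseteq M(BMBMB)M$ and $BMBMB=B(MBM)B\subseteq BIB=\{0\}$), hence $MBM=\{0\}$; then $(BM)^2=B(MBM)=\{0\}$, so $BM$ is a nilpotent right ideal of $M$ and therefore $BM=\{0\}$, whence for each $b\in B$ we have $bMb\subseteq (BM)b=\{0\}$ and so $b=0$ by semiprimeness of $M$. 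Finally, for each $x\in I$ the expression for $D(x)$ obtained above gives
\[
D(x)x=[x,m_x]x=[x,m_xx]\qquad\text{and}\qquad D(x)x^2=[x,m_x]x^2=[x,m_xx^2],
\]
and since $m_xx,\ m_xx^2\in MI\subseteq I$, both of these elements lie in $[I,I]$.

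All hypotheses of Theorem~\ref{l3} being met, it follows that $D|_I\colon I\to I$ is a derivation, i.e.\ $D(xy)=D(x)y+xD(y)$ for all $x,y\in I$, which is exactly the assertion. I do not expect a genuine obstacle here: the identity $[x,m_x]x=[x,m_xx]$ is a one-line computation, and the two mildly technical points—that $D$ maps $I$ into $I$ and that an ideal of a semiprime algebra is semiprime—are routine. The only thing worth a moment's care is the bookkeeping that the element $m_x$ depends on $x$, which is harmless since each displayed relation is invoked for a single fixed $x$.
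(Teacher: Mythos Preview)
Your overall strategy is exactly the paper's: restrict $D$ to $I$, use $D(x)=[x,m_x]$ to see that $D(I)\subseteq I$ and that $D(x)x,\,D(x)x^2\in[I,I]$, then apply Theorem~\ref{l3} to $I$.

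There is, however, one genuine slip in your verification that $I$ is semiprime. You repeatedly invoke ``semiprimeness of $M$'' (to conclude $MBM=\{0\}$, then $BM=\{0\}$, then $b=0$), but the hypotheses only say that $A$ is semiprime; nothing is assumed about $M$. The fix is to work inside $A$ rather than $M$. Since $I$ is an ideal of $M$ contained in $A$, and $A\subseteq M$, we have $AI\subseteq MI\subseteq I$ and $IA\subseteq IM\subseteq I$, so $I$ is also an ideal of $A$. Now the standard fact applies: an ideal of a semiprime ring is semiprime. (Concretely, if $b\in I$ satisfies $bIb=0$, then for all $a,c\in A$ one has $(bab)c(bab)=b\,[a(bcb)a]\,b\in bIb=0$ because $a(bcb)a\in AIA\subseteq I$; hence $(bab)A(bab)=0$, so $bab=0$ by semiprimeness of $A$, i.e.\ $bAb=0$, and then $b=0$.) With this correction your argument goes through and matches the paper's proof.
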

\begin{proof}
	Take $x\in I$.  Since $x\in A$, by our assumption there exists an $m_x\in M$ such that 
	 $D(x)=[x,m_x]\in I$. Therefore, $D$ maps $I$ into itself, and just as in~\eqref{xnan} we see that 
	 \[
D(x)x^n = [x,m_x]x ^n = [x,m_xx^n]\in [I,I]
\]
for every positive integer $n$. As an ideal of a semiprime algebra, $I$ itself is semiprime. Applying Theorem~\ref{l3} to the algebra $I$ it follows that  $D$ is a derivation on $I$.
\end{proof}

\begin{theorem}\label{locder}
Let $G$ be a unimodular locally compact group. 
Then every continuous local derivation $D\colon L^1(G)\to L^1(G)$ is a derivation. 
\end{theorem}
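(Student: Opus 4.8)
The plan is to deduce the theorem from Corollary~\ref{p1209alternative}, applied with $A=L^1(G)$ and $M=M(G)$, the measure algebra of $G$ (equivalently, the multiplier algebra of $L^1(G)$). Recall that $L^1(G)$ is a closed two-sided ideal of $M(G)$, that $L^1(G)$ is a semisimple Banach algebra, hence semiprime, and that $\operatorname{char}(\mathbb{C})\neq 2$. So, to invoke Corollary~\ref{p1209alternative}, two things remain: to verify that every derivation $L^1(G)\to L^1(G)$ is of the form $f\mapsto[f,\mu]$ for some $\mu\in M(G)$, and to locate a commutator-simple ideal of $M(G)$ that is contained in $L^1(G)$ and large enough to make the conclusion useful.

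For the first point I would argue as follows. Every derivation of the semisimple Banach algebra $L^1(G)$ is automatically continuous, so in particular each derivation $D_x$ occurring in the definition of the local derivation $D$ is continuous; and, by the affirmative solution of the derivation problem for group algebras, every continuous derivation of $L^1(G)$ into itself is inner in the sense that it has the form $f\mapsto f\ast\mu-\mu\ast f$ for some $\mu\in M(G)$. Thus the hypothesis of Corollary~\ref{p1209alternative} holds with $M=M(G)$, and each value $D(x)=D_x(x)$ equals $[x,\mu_x]$ for a suitable $\mu_x\in M(G)$.

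For the second point I would take $I=L^1(G)\cap L^2(G)$. Since $G$ is unimodular, both left and right translations act isometrically on $L^2(G)$, so Young's inequality shows that $\mu\ast f$ and $f\ast\mu$ lie in $L^2(G)$ whenever $\mu\in M(G)$ and $f\in L^2(G)$; as $L^1(G)$ is already an ideal of $M(G)$, it follows that $L^1(G)\cap L^2(G)$ is a two-sided ideal of $M(G)$. It is clearly contained in $L^1(G)$, and it is commutator-simple by Proposition~\ref{g1}(ii). Hence Corollary~\ref{p1209alternative} yields that $D$ restricts to a derivation on $L^1(G)\cap L^2(G)$, i.e.\ $D(f\ast g)=D(f)\ast g+f\ast D(g)$ for all $f,g\in L^1(G)\cap L^2(G)$.

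It then remains to bootstrap to all of $L^1(G)$. Since $C_{00}(G)\subseteq L^1(G)\cap L^2(G)$, the algebra $L^1(G)\cap L^2(G)$ is dense in $L^1(G)$, and as $D$ and convolution are continuous on $L^1(G)$, the derivation identity extends from $L^1(G)\cap L^2(G)$ to all of $L^1(G)$; so $D$ is a derivation. In this scheme the substantive external input is the solution of the derivation problem for group algebras used in the first point; everything else is assembling Corollary~\ref{p1209alternative}, the two unimodularity estimates, and a routine density argument. I expect the main temptation — and pitfall — to be trying to apply Corollary~\ref{p1209alternative} directly with $I=A=L^1(G)$, which would require $L^1(G)$ itself to be commutator-simple for every unimodular $G$; the central-approximate-identity argument of Proposition~\ref{g2} does not obviously cover groups outside the SIN class, so passing through the dense ideal $L^1(G)\cap L^2(G)$ instead is the point that makes the proof go through.
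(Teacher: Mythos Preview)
Your proposal is correct and follows essentially the same route as the paper: apply Corollary~\ref{p1209alternative} with $A=L^1(G)$, $M=M(G)$, and $I=L^1(G)\cap L^2(G)$, using Losert/BGM for the inner-derivation hypothesis and Proposition~\ref{g1}(ii) for commutator-simplicity, then pass to all of $L^1(G)$ by density and continuity. Your explicit verification that $L^1(G)\cap L^2(G)$ is a two-sided ideal of $M(G)$ (not merely of $L^1(G)$) via Young's inequality is actually a point the paper states more loosely, so you have supplied a detail the paper leaves implicit.
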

\begin{proof}
We will check that Corollary~\ref{p1209alternative} applies with
$A=L^1(G)$ and $M=M(G)$, the Banach algebra of complex Radon measures on $G$. Of course, $A$ is a closed ideal of $M$.

A remarkable property of the group algebra $L^1(G)$ is that each derivation from $L^1(G)$ to itself is of the form $f\mapsto f\ast\mu-\mu\ast f$ for some $\mu\in M(G)$ (see~\cite{BGM, losert}).
We take  $I=L^1(G)\cap L^2(G)$, which is an ideal of $A$ and,  
by Proposition~\ref{g1}, $I$ is a commutator-simple algebra. 
From Corollary~\ref{p1209alternative} we see that $D$ is a derivation on $I$.
Since $I$  is dense in $L^1(G)$ and $D$ is continuous on $L^1(G)$, we conclude that $D$ is a derivation on $L^1(G)$.
\end{proof}

In \cite{Sam1, Sam2}, the author shows that every continuous (approximately) local derivation from $L^1(G)$ to
any Banach $L^1(G)$-bimodule $X$ is a derivation for a large class of groups including 
PG-groups, IN-groups, MAP-groups, and totally disconnected groups. In order to relate this result to Theorem~\ref{locder}, we  mention that the class of unimodular groups strictly contains the classes [PG], [IN], and [MAP]
(we refer the reader to~\cite[Diagram~1, page~1486]{Pal} for this fact),
and, on the other hand, unlike \cite{Sam1, Sam2}, we are confined to maps from $L^1(G)$ to itself.
To the best of our knowledge, Theorem~\ref{locder} gives a new information.

\section{Jordan homomorphisms}\label{s4}

A linear map $T$ from an algebra $A$ to itself is called a \emph{Jordan homomorphism} if 
\[
T(x^2) = T(x)^2
\]
for every $x\in A$. Basic examples are homomorphisms and antihomomorphisms and, under suitable conditions, these are also the only examples. However, we shall not go into this here.

The idea of the proof of the following theorem is also taken from~\cite{Bj}.  However, there are important differences in details.

\begin{theorem}\label{p}
Let $A$ be a commutator-simple unital  algebra over a field $F$ with $\operatorname{char}(F)\ne 2,3$. If a surjective linear map $T\colon A\to A$ satisfies 
$T(1)=1$ and   $T(x)^3- x^3\in [A,A]$ for every $x\in A$, then $T$ is a Jordan homomorphism.
\end{theorem}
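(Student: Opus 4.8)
The plan is to imitate the linearization strategy used for Theorem~\ref{l3}, writing $x\equiv y$ for $x-y\in[A,A]$, and to extract from the hypothesis $T(x)^3\equiv x^3$ enough ``almost-multiplicative'' relations that, after invoking commutator-simplicity, become genuine identities. First I would linearize the cubic relation. Replacing $x$ by $x+ty$ for an indeterminate $t$ (or by the various sums $x\pm y$, $x\pm y\pm z$, using $\operatorname{char}(F)\ne 2,3$ to invert the relevant integers) and collecting the homogeneous components, I would obtain, for all $x,y,z\in A$,
\begin{align}
T(x)^2T(y)+T(x)T(y)T(x)+T(y)T(x)^2 &\equiv x^2y+xyx+yx^2,\label{pp1}\\
\sum_{\sigma}T(x_{\sigma(1)})T(x_{\sigma(2)})T(x_{\sigma(3)}) &\equiv \sum_\sigma x_{\sigma(1)}x_{\sigma(2)}x_{\sigma(3)},\label{pp2}
\end{align}
where in \eqref{pp2} the sums run over all permutations $\sigma$ of $\{1,2,3\}$ and $x_1=x$, $x_2=y$, $x_3=z$. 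Setting $y=1$ in \eqref{pp1} and using $T(1)=1$ gives $T(x)^2+2T(x)\equiv x^2+2x$, i.e. $T(x)^2-x^2\equiv -2\bigl(T(x)-x\bigr)$; this already tells us that $S:=T-\mathrm{id}$ takes values that are ``small mod $[A,A]$'' and is the key linearizing device.

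Next I would exploit \eqref{pp2} to produce the bilinear relation we actually want. Since terms like $T(a)T(b)T(c)$ can be cyclically permuted mod $[A,A]$ (as $[uv,w]\in[A,A]$), \eqref{pp2} should collapse, after using \eqref{pp1} and the $y=1$ specialization, to a relation of the shape
\[
\bigl(T(x)T(y)-T(xy)+\text{lower-order terms}\bigr)\,z\equiv 0
\]
for all $x,y,z$, or more precisely a relation expressing that some fixed element $a=a(x,y)$ built from $T$ satisfies $az\equiv 0$ for all $z\in A$. This is exactly the situation that the commutator-simplicity argument from Theorem~\ref{l3} handles: from $az\equiv 0$ for all $z$ one gets $waz\equiv 0$ for all $w,z$ (because $waz=[w,az]+a(zw)$), so the ideal $AaA$ — and in a unital algebra simply $a$ itself, since $a=1\cdot a\cdot 1$ lies in $AaA$ — is contained in $[A,A]$, forcing $a=0$. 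Running this for each choice of $x,y$ yields the genuine identity, and I would then show it is equivalent to $T(x^2)=T(x)^2$, possibly after also linearizing $T(x^2)=T(x)^2$ in the standard way and checking the two linearized forms agree; the unital normalization $T(1)=1$ is used to pin down the additive constants that appear.

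The main obstacle I expect is the bookkeeping in passing from \eqref{pp2} to a clean relation of the form $az\equiv 0$: a priori \eqref{pp2} only controls a fully symmetrized sum of triple products, and mod $[A,A]$ one can cyclically rotate each product but not transpose adjacent factors, so one must carefully combine \eqref{pp2} with \eqref{pp1} and with the quadratic relation $T(x)^2-x^2\equiv -2(T(x)-x)$ to eliminate the unwanted symmetric combinations and isolate a single multiplication-defect $a(x,y)$. A secondary subtlety is that $[A,A]$ is not an algebra, so one cannot freely multiply congruences; every step that upgrades $az\equiv 0$ to $waz\equiv 0$ must go through the commutator identity explicitly, and one must make sure that the element $a$ one isolates is genuinely a fixed element of $A$ (not depending on $z$) before invoking commutator-simplicity. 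Once the identity is in hand, concluding that $T$ is a Jordan homomorphism is routine. Here is where the hypotheses $\operatorname{char}(F)\ne 2,3$ and $T(1)=1$ are essential: the former to invert $2$, $3$, $6$ in the linearizations, the latter to control constants; surjectivity of $T$ does not seem to be needed for this particular conclusion, though it is presumably used in the subsequent applications.
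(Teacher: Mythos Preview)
Your overall framework --- linearize, use cyclic permutation modulo $[A,A]$, isolate a fixed defect $a$ with $aA\subseteq[A,A]$, then invoke commutator-simplicity --- matches the paper's, but the route you sketch through the full trilinearization is a detour, and the ``main obstacle'' you flag is bypassed by a trick you already have in hand but do not apply in the right place. Apply the cyclic permutation directly to your quadratic-in-$x$ relation
\[
T(x)^2T(y)+T(x)T(y)T(x)+T(y)T(x)^2 \equiv x^2y+xyx+yx^2:
\]
since $T(x)^2T(y)\equiv T(x)T(y)T(x)\equiv T(y)T(x)^2$ and likewise on the right, this collapses (using $\operatorname{char}(F)\ne 3$) immediately to
\[
T(x)^2T(y)\equiv x^2y \qquad\text{for all }x,y\in A.
\]
This single relation is the pivot of the paper's argument; the fully symmetric trilinear identity is never used. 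From here one substitutes $y\mapsto y^2$ on one hand, and swaps $x\leftrightarrow y$ followed by $x\mapsto x^2$ on the other, obtaining
\[
T(x)^2T(y^2)\equiv x^2y^2\equiv y^2x^2\equiv T(y)^2T(x^2)\equiv T(x^2)T(y)^2.
\]
Replacing $y$ by $y+1$ and using $T(1)=1$ then isolates $\bigl(T(x^2)-T(x)^2\bigr)T(y)\equiv 0$. At this point surjectivity of $T$ is essential: it is what turns this into $\bigl(T(x^2)-T(x)^2\bigr)z\equiv 0$ for \emph{all} $z\in A$, after which your commutator-simplicity step (via $waz=[w,az]+a(zw)$) finishes. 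So your closing remark that surjectivity ``does not seem to be needed'' is wrong --- without it you control only $a\,T(A)$, not $aA$.

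Two smaller corrections. Your $y=1$ specialization is miscomputed: with $T(1)=1$ all three terms on each side coincide, giving $3T(x)^2\equiv 3x^2$, hence $T(x)^2\equiv x^2$, not $T(x)^2+2T(x)\equiv x^2+2x$. And the defect to isolate is $T(x^2)-T(x)^2$, not an expression involving $T(xy)$: the hypotheses give no direct handle on $T(xy)$, and the target is a Jordan homomorphism, not a homomorphism.
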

\begin{proof} As above, we write $x\equiv y$ for $x-y\in [A,A]$. 
Since char$(F)\ne 2$, replacing $x$ by $x\pm y$ in $T(x)^3\equiv x^3$ gives
\[
T(x)^2 T(y) + T(x)T(y)T(x) + T(y)T(x)^2 \equiv  x^2 y + xyx + yx^2.
\]
Observing that 
\begin{gather*}
T(x)^2 T(y) \equiv T(x)T(y)T(x) \equiv T(y)T(x)^2, \\
 x^2 y \equiv xyx \equiv yx^2,
\end{gather*} 
and using $\operatorname{char}(F)\ne 3$  it  follows that
\begin{equation*} 
T(x)^2T(y)\equiv x^2 y
\end{equation*}
for all $x,y\in A$. Hence,
\[
T(x)^2T(y^2) \equiv x^2 y^2 \equiv y^2x^2 \equiv T(y)^2 T(x^2) \equiv T(x^2) T(y)^2.
\]
Replacing $y$ by $y+ 1$ in  $T(x)^2T(y^2)=T(x)^2T(y)^2$ and using $T(1)=1$ we arrive at
\[
T(x)^2T(y) \equiv T(x^2) T(y)
\]
for all $x,y\in A$. Since $T$ is surjective, this means that $$\big(T(x^2)- T(x)^2\big)A\subseteq [A,A],$$ which,
by  $zay = [z,ay] + a(yz)$, implies that 
\[
A\big(T(x^2)- T(x)^2\big)A\subseteq [A,A].
\] 
Since $A$ is commutator-simple, it follows that $T(x^2)- T(x)^2=0$.
\end{proof}

Let $A$ be a unital algebra. By a \emph{local inner automorphism} of $A$ we, of course, mean a linear map $T\colon A\to A$ 
such that, for every $x\in A$, there exists 
an inner automorphism $T_x$ of $A$  such that $T(x)=T_x(x)$;
that is, $T(x)$ is always of the form  $u_xxu_x^{-1}$ for some invertible $u_x\in A$.
 Observe that such a map is automatically injective and sends $1$ to $1$.

\begin{corollary}\label{c}
If $A$ is a commutator-simple unital  algebra over a field $F$ with $\operatorname{char}(F)\ne 2,3$, 
then every surjective local inner automorphism of $A$  is a  Jordan automorphism.
\end{corollary}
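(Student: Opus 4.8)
The plan is to derive Corollary~\ref{c} directly from Theorem~\ref{p} by checking that a surjective local inner automorphism satisfies the cubic commutator condition. First I would fix a surjective local inner automorphism $T\colon A\to A$. As already observed in the paragraph preceding the corollary, $T$ automatically sends $1$ to $1$ (apply the defining property at $x=1$: $T(1)=u_1 1 u_1^{-1}=1$), so one of the two hypotheses of Theorem~\ref{p} is free. It remains to verify that $T(x)^3-x^3\in[A,A]$ for every $x\in A$.

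The key computation is short. Given $x\in A$, pick an invertible $u_x\in A$ with $T(x)=u_x x u_x^{-1}$. Then
\[
T(x)^3=(u_x x u_x^{-1})^3=u_x x^3 u_x^{-1},
\]
since the intermediate factors $u_x^{-1}u_x$ cancel. Hence
\[
T(x)^3-x^3=u_x x^3 u_x^{-1}-x^3=[u_x,\,x^3 u_x^{-1}]\in[A,A],
\]
using the standard identity $u v u^{-1}-v=u(vu^{-1})-(vu^{-1})u=[u,vu^{-1}]$ with $v=x^3$. This is exactly the hypothesis $T(x)^3-x^3\in[A,A]$ of Theorem~\ref{p}.

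Since $A$ is commutator-simple and unital with $\operatorname{char}(F)\ne 2,3$, and $T$ is surjective with $T(1)=1$ and $T(x)^3-x^3\in[A,A]$ for all $x$, Theorem~\ref{p} applies and $T$ is a Jordan homomorphism. Finally, $T$ is injective (being a surjective Jordan homomorphism on a finite setup is not available here, so instead note that injectivity of $T$ was already observed from the local-inner-automorphism structure, or simply invoke that a surjective Jordan homomorphism of a unital algebra which is also injective is a Jordan automorphism); combined with surjectivity, $T$ is a Jordan automorphism, which is the assertion.

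I do not expect any serious obstacle: the corollary is essentially a one-line specialization of Theorem~\ref{p}, the only points requiring a word being the cancellation $(u x u^{-1})^3=u x^3 u^{-1}$ and the rewriting of $uvu^{-1}-v$ as a commutator. The mildest subtlety is bookkeeping about injectivity of $T$, but this is immediate from the definition of a local inner automorphism (each $T_x$ is bijective and agrees with $T$ at $x$, and more directly one notes that the conclusion "$T$ is a Jordan automorphism" only requires $T$ bijective, which follows from surjectivity plus the already-noted injectivity).
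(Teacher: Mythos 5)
Your proposal is correct and follows essentially the same route as the paper: verify $T(1)=1$, compute $(u_xxu_x^{-1})^3=u_xx^3u_x^{-1}$, rewrite $u_xx^3u_x^{-1}-x^3$ as the commutator $[u_x,x^3u_x^{-1}]$, and invoke Theorem~\ref{p}. The closing remarks on injectivity are harmless but unnecessary, since (as noted before the corollary) a local inner automorphism is automatically injective.
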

\begin{proof} 
Observe that $$uxu^{-1} - x = [u,xu^{-1}]\equiv 0$$ for every invertible $u\in A$ and every $x\in A$. Hence 
\[
T(x)^3 = (u_xxu_x^{-1})^3 = u_xx^3u_x^{-1} \equiv x^3
\] 
for every $x\in A$, and so Theorem~\ref{p} applies.
\end{proof}

Every matrix $x$ is similar to its transpose $x^t$, so $x\mapsto x^t$ is an example of a surjective local 
inner automorphism of the matrix algebra $M_n(F)$ that is a Jordan automorphism but not an automorphism. 
We also remark that since all automorphisms of this algebra are automatically  inner, the notion of a local 
inner automorphism here coincide with the notion of a local automorphism.

We also need a technical refinement of this corollary that concerns algebras that are not necessarily  unital.

\begin{corollary}\label{o0}
Let $A$ be a commutator-simple algebra over a field $F$ with $\operatorname{char}(F)\ne 2,3$.
Assume further that $y\in Ay$ for each $y\in A$ and that $A$ is an ideal of a unital algebra $M$. 
If $T\colon A\to A$ is a surjective linear map such that
for each $y\in A$ there exists an invertible element $u_y\in M$ satisfying $T(y)=u_yyu_y^{-1}$, 
then $T$ is a  Jordan automorphism.
\end{corollary}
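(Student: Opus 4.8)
The plan is to reduce this statement to Corollary~\ref{c} by passing to the unitization $A^\sharp = A \oplus F1_M$ inside $M$ (or, equivalently, to the unital algebra $M$ itself restricted appropriately), and showing that the hypotheses of Theorem~\ref{p} hold after a suitable bookkeeping of commutators. The key obstacle to a direct application of Corollary~\ref{c} is twofold: first, $A$ need not be unital, so ``$T(1)=1$'' does not literally make sense; second, the conjugating elements $u_y$ live in the overalgebra $M$, not in $A$, so one must check that $u_y y u_y^{-1} - y$ still lies in $[A,A]$ and not merely in $[M,M]$.

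First I would verify that $T$ maps $A$ into $[A,A]$-relations correctly: for $y \in A$ write $y = ay'$ with $a \in A$ (using the hypothesis $y \in Ay$, more precisely $y \in Ay$ gives $y = \sum a_i y$ with $a_i \in A$; I'll use this in the form needed below). The crucial computation is that for invertible $u \in M$ and $y \in A$,
\[
u y u^{-1} - y = [u, y u^{-1}],
\]
and since $y u^{-1} \in A$ (as $A$ is an ideal of $M$) and $u y u^{-1} \in A$ as well, the element $u y u^{-1} - y$ belongs to $A$; moreover $[u, y u^{-1}] = u(yu^{-1}) - (yu^{-1})u$ with both products in $A$, so this commutator lies in $[A,A]$ provided we can write $u$ ``modulo its action'' — here the cleaner route is: $u y u^{-1} - y \in A$ and it equals a commutator of two elements of $M$ one of which lies in $A$, hence by the identity $[m, b] = [m b, \cdot]$-type manipulations, or simply by observing $u(yu^{-1}) - (yu^{-1})u$ and then multiplying through by elements of $A$ on the left using $y \in Ay$, one lands inside $[A,A]$. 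I would spell this out: for $y\in A$ pick $a\in A$ with $y = a\,y$; then $T(y) - y = u_y y u_y^{-1} - y$, and cubing as in Corollary~\ref{c} gives $T(y)^3 - y^3 = u_y y^3 u_y^{-1} - y^3 = [u_y, y^3 u_y^{-1}]$; now left-multiply the relevant identities by $a$ to move everything into $[A,A]$, using that $y^3 = a\,y^3$ too.

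Next, having established $T(y)^3 - y^3 \in [A,A]$ for all $y \in A$, I would run the argument of Theorem~\ref{p} \emph{verbatim but inside $A$}, never invoking a unit of $A$: the polarization steps (replacing $y$ by $y \pm z$, using $\operatorname{char}(F) \ne 2, 3$) yield $T(y)^2 T(z) \equiv y^2 z$ for all $y, z \in A$, and then $T(y)^2 T(z^2) \equiv T(z^2) T(y)^2$, etc. The only place Theorem~\ref{p} used $T(1) = 1$ was to pass from $T(x)^2 T(y^2) \equiv T(x)^2 T(y)^2$ to $T(x)^2 T(y) \equiv T(x^2) T(y)$ by substituting $y + 1$; here instead I would use surjectivity of $T$ together with the hypothesis $z \in Az$ to argue directly that $\bigl(T(x^2) - T(x)^2\bigr) A \subseteq [A,A]$ — specifically, from $T(x)^2 T(z^2) \equiv T(x^2) T(z)^2$ combined with the already-derived $T(x)^2 w \equiv x^2 w$ and the symmetric relations, one isolates $\bigl(T(x^2) - T(x)^2\bigr)$ acting on squares, and since every $z \in A$ satisfies $z \in Az \subseteq A \cdot A^2$ — more carefully, I expect to need the linear span of $\{z^2\} \cup \{zw + wz\}$ to be all of $A$, which follows from $z \in Az$ by the polarization $zw + wz = (z+w)^2 - z^2 - w^2$ and surjectivity. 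Then, as in Theorem~\ref{p}, $zay = [z,ay] + a(yz)$ promotes the one-sided inclusion to $A\bigl(T(x^2)-T(x)^2\bigr)A \subseteq [A,A]$, and commutator-simplicity forces $T(x^2) = T(x)^2$, so $T$ is a Jordan homomorphism. Finally, surjectivity of $T$ plus the fact that $T$ is induced pointwise by conjugations (hence injective: if $T(y) = 0$ then $u_y y u_y^{-1} = 0$ so $y = 0$) makes $T$ a Jordan automorphism.

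The main obstacle I anticipate is the bookkeeping in the previous paragraph: making the substitute for the ``$y + 1$ trick'' fully rigorous without a unit in $A$. The hypothesis $y \in Ay$ is clearly placed there precisely for this — it guarantees $A = A^2$ in a strong pointwise sense and lets one replace ``evaluate at $1$'' by ``use that products $aw$ exhaust $A$'' — so I would make sure every step that previously read ``apply to $1$'' is rewritten as ``apply to an arbitrary element, then use $A = A \cdot A$.''
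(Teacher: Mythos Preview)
Your initial instinct to pass to the unitization $A_1=F1+A\subseteq M$ is exactly what the paper does, and you should have followed it through rather than abandoning it for a direct rerun of Theorem~\ref{p} inside $A$. The direct route has a real gap at the point you yourself flag: from $T(x)^2T(y^2)\equiv T(x^2)T(y)^2$ you cannot extract $(T(x^2)-T(x)^2)w\equiv 0$, because the two sides involve $T(y^2)$ and $T(y)^2$, which are \emph{different} until the conclusion is proved. Polarizing in $y$ gives $T(x)^2T(yz+zy)\equiv T(x^2)\bigl(T(y)T(z)+T(z)T(y)\bigr)$, and even if Jordan products span $A$ (which your $y\in Ay$ argument only gives modulo $[A,A]$), the right-hand side is not $T(x^2)T(yz+zy)$, so nothing factors. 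The ``$y+1$ trick'' in Theorem~\ref{p} is precisely what collapses this mismatch, and there is no evident replacement inside $A$.

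The paper instead extends $T$ to $T_1\colon A_1\to A_1$ by $T_1(\alpha 1+y)=\alpha 1+T(y)$, checks that $A_1$ is still commutator-simple (since $[A_1,A_1]=[A,A]$), and then verifies $T_1(x)^3-x^3\in[A,A]$ for \emph{all} $x=\alpha 1+y\in A_1$. Expanding, one must place each of $uyu^{-1}-y$, $uy^2u^{-1}-y^2$, $uy^3u^{-1}-y^3$ in $[A,A]$. The last two are immediate: $uy^ku^{-1}-y^k=[uy,\,y^{k-1}u^{-1}]$ with both entries in the ideal $A$, so the hypothesis $y\in Ay$ is \emph{not} needed where you tried to use it. It is needed only for the linear term: picking $e\in A$ with $ey=y$, one has
\[
uyu^{-1}-y=[ue,\,yu^{-1}]+[y,e]\in[A,A].
\]
With this in hand, Theorem~\ref{p} applies verbatim to $T_1$ on the unital commutator-simple algebra $A_1$, giving that $T_1$ (hence $T$) is a Jordan homomorphism; injectivity of $T$ is exactly as you note.
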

\begin{proof}
Write $A_1=F1+A$.
We claim that $A_1$ is a commutator-simple algebra. Indeed,  let $I$ be an ideal of $A_1$ such that
$I\subseteq [A_1,A_1]$. Since $[A_1,A_1]=[A,A]\subseteq A$  we see that  $I$ is an ideal of $A$ contained in $[A,A]$, and so $I=\{0\}$.

Define $T_1\colon A_1\to A_1$ by 
\[
T_1(\alpha 1+y)=\alpha 1+T(y)
\] 
for all $\alpha\in F$ and $y\in A$. Observe that
$T_1$ is a well-defined linear map. Moreover, it is surjective and $T_1(1)=1$.
We claim that $T_1(x)^3\equiv x^3$ for each $x\in A_1$.
Set $x=\alpha 1+y\in A_1$.
Take $e\in A$ such that $y=ey$ and write  $u = u_y\in M$.
Then
\begin{align*}
T_1(x)^3-x^3 & = (\alpha 1 + uyu^{-1})^3 - (\alpha 1 + y) ^3
\\
& =3\alpha^2(uyu^{-1}-y)+3\alpha(uy^2u^{-1}-y^2)+(uy^3u^{-1}-y^3)
\\ & =
3\alpha^2[ue,yu^{-1}]+3\alpha^2[y,e]+3\alpha[uy,yu^{-1}]+[uy,y^2u^{-1}]\in [A,A],
\end{align*}
as claimed.
By Theorem~\ref{p} it follows that $T_1$ is a Jordan automorphism and hence $T$ is a Jordan automorphism.
\end{proof}

Let $A$ be an algebra of linear operators on a vector space that contains all finite rank operator. As a local automorphism of 
$A$ preserves the rank of any finite rank operator, it seems likely that its form can be determined by using well known results 
on rank preservers. What we will establish in the next lines could therefore be obtained by other means, or may even be already 
known. However, our method of proof is certainly new.

\begin{corollary}\label{o1}
Let $A$ be the algebra of all finite rank linear operators on a vector space $X$ over a field $F$ with $\operatorname{char}(F)\ne 2,3$.
Then every surjective local automorphism of $A$ is a  Jordan automorphism.
\end{corollary}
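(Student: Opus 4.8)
The plan is to deduce Corollary~\ref{o1} from Corollary~\ref{o0} by producing, for each finite rank operator, an invertible operator that implements the same similarity on an ambient unital algebra. Let $A=\FiniteRank(X)$ be the algebra of all finite rank linear operators on $X$, and let $M=\LL(X)$ be the algebra of all linear operators on $X$, which is unital (with $1$ the identity operator) and contains $A$ as a two-sided ideal. By Proposition~\ref{frank}, $A$ is commutator-simple, and $\operatorname{char}(F)\ne 2,3$ by hypothesis. Also, every finite rank operator $y$ factors as $y=ey$ for a suitable finite rank idempotent $e$ (take $e$ to be a projection onto the range of $y$ along some complement), so $y\in Ay$ for each $y\in A$; thus the structural hypotheses of Corollary~\ref{o0} are in place.

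The content to verify is the local-similarity condition: for each $y\in A$ there is an \emph{invertible} $u_y\in M=\LL(X)$ with $T(y)=u_y y u_y^{-1}$. By definition of a local automorphism of $A$, there is an automorphism $\Phi_y$ of $A$ with $T(y)=\Phi_y(y)$. So the key step is: every automorphism $\Phi$ of $\FiniteRank(X)$ is of the form $z\mapsto u z u^{-1}$ for some invertible $u\in \LL(X)$. This is a classical fact about the algebra of finite rank operators (it is essentially the statement that $\FiniteRank(X)$, as a simple ring with minimal one-sided ideals, has all its ring automorphisms induced by semilinear bijections of the underlying space, and $F$-algebra automorphisms force the associated field automorphism to be the identity, hence an honest linear $u$). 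One can prove it directly: $\Phi$ permutes the rank-one idempotents, hence carries the lattice structure of minimal left ideals isomorphically; choosing a rank-one idempotent $p=\xi\otimes f$ with $f(\xi)=1$, the left ideal $Ap$ is ``column-like'' and $\Phi$ induces an $F$-linear bijection on an associated copy of $X$, which one checks patches to a single invertible $u\in\LL(X)$ with $\Phi(z)=uzu^{-1}$. I would cite the standard reference for automorphisms of $\FiniteRank(X)$ (e.g. the theory of simple rings with minimal ideals, as in Jacobson's \emph{Structure of Rings}, or the rank-one idempotent argument) rather than reprove it.

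With that in hand, for each $y\in A$ pick the automorphism $\Phi_y$ with $T(y)=\Phi_y(y)$ and let $u_y\in\LL(X)$ be an invertible operator implementing $\Phi_y$; then $T(y)=u_y y u_y^{-1}$ as required. All the hypotheses of Corollary~\ref{o0} are now met ($A$ commutator-simple, $\operatorname{char}(F)\ne 2,3$, $y\in Ay$, $A$ an ideal of the unital algebra $M=\LL(X)$, and $T$ a surjective linear map with the stated local-similarity property), so Corollary~\ref{o0} yields that $T$ is a Jordan automorphism of $A$, which is the claim.

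The main obstacle is the step identifying automorphisms of $\FiniteRank(X)$ with conjugations by invertible linear operators: one must be careful that an $F$-\emph{algebra} automorphism (not merely a ring automorphism) produces an honestly $F$-linear $u$ rather than a semilinear one, and that the local hypothesis only supplies automorphisms of $A=\FiniteRank(X)$ itself, so it is exactly these — and not automorphisms of a larger algebra — whose form we need. Once this structural lemma is quoted or established, the rest is a direct bookkeeping application of Corollary~\ref{o0}.
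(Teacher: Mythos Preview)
Your proposal is correct and follows essentially the same route as the paper's own proof: both set $M=\LL(X)$, verify the hypotheses of Corollary~\ref{o0} (commutator-simplicity via Proposition~\ref{frank}, the condition $y\in Ay$ via a finite rank projection onto the range, and the inner form of automorphisms of $\FiniteRank(X)$ via Jacobson's structure theory), and then invoke Corollary~\ref{o0}. Your added remark about needing $F$-linearity of $u$ (rather than mere semilinearity) is a nice point of care but does not alter the argument.
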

\begin{proof} 
Let $M$ be the algebra of all linear operators on $X$,
and we will check that all requirements in Corollary~\ref{o0} are satisfied.
By Proposition~\ref{frank}, $A$ is commutator-simple.
For each $x\in A$, we can choose an idempotent operator $P\colon X\to X$ mapping onto the range
of $x$, which implies that $P\in A$ and $x=Px$, so that $x\in Ax$. Further, $M$ is a unital algebra 
and $A$ is an ideal of $M$.
For each $x\in A$ there exists an automorphism $\Phi_x\colon A\to A$ such that $T(x)=\Phi_x(x)$,
and~\cite[Section~IV.11]{Jac} shows that there exists an invertible linear operator $u_x\colon X\to X$
such that $\Phi_x(y)=u_xyu_x^{-1}$ for each $y\in A$. Proposition~\ref{o0} gives the desired conclusion.
\end{proof}

\begin{corollary}\label{o2}
Let $X$ be a normed space.
Then every surjective local automorphism of $F(X)$ is a Jordan automorphism.
\end{corollary}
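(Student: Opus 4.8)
The plan is to deduce Corollary~\ref{o2} from Corollary~\ref{o0}, exactly paralleling the proof of Corollary~\ref{o1}, with the role of the algebra of all linear operators on $X$ now played by $B(X)$, the algebra of all continuous linear operators on a normed space $X$. First I would take $M=B(X)$ and $A=F(X)$. By Proposition~\ref{cfrank}, $A=F(X)$ is commutator-simple, which takes care of the first hypothesis of Corollary~\ref{o0}. The algebra $M=B(X)$ is unital and $A=F(X)$ is a (two-sided) ideal of $M$, since composing a continuous finite rank operator with any continuous operator again yields a continuous finite rank operator. For the condition $y\in Ay$ for each $y\in A$: given a continuous finite rank operator $y$, its range is a finite-dimensional, hence closed, subspace, and one can pick a continuous idempotent $P\in B(X)$ with range equal to the range of $y$ (for instance, choosing a basis $\xi_1,\dots,\xi_k$ of the range together with continuous functionals $f_1,\dots,f_k$ satisfying $f_i(\xi_j)=\delta_{ij}$, obtained via the Hahn--Banach theorem, and setting $P=\sum_i \xi_i\otimes f_i$). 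Then $P\in F(X)=A$ and $Py=y$, so $y\in Ay$.

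Next I would address the local structure of $T$. By hypothesis, for each $x\in F(X)$ there is a (surjective, hence bijective) automorphism $\Phi_x$ of $F(X)$ with $T(x)=\Phi_x(x)$. The point is to realise $\Phi_x$ as conjugation by an invertible element of $M=B(X)$. Every automorphism of the algebra of finite rank operators is of the form $y\mapsto u y u^{-1}$ for an invertible linear operator $u\colon X\to X$ (this is the fact used in Corollary~\ref{o1}, via the Jacobson reference or the theory of rank-one idempotents); what one needs here in addition is that this $u$, and its inverse, can be taken to be \emph{continuous}, i.e.\ $u\in B(X)$ is invertible in $B(X)$. I would obtain this by noting that $\Phi_x$, being an algebra automorphism of $F(X)$, preserves rank-one idempotents and thus the lattice structure, and that the continuity of $u$ and $u^{-1}$ follows because $\Phi_x$ and $\Phi_x^{-1}$ are continuous (an automorphism of $F(X)$ with $X$ a normed space is automatically continuous, or one can invoke the standard description of automorphisms of standard operator algebras on normed spaces, e.g.\ as in the literature on rank preservers cited in the paragraph preceding Corollary~\ref{o1}). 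Granting this, we have $u_x\in M=B(X)$ invertible with $T(x)=u_x x u_x^{-1}$ for each $x\in A$.

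With all the hypotheses of Corollary~\ref{o0} verified --- $A=F(X)$ commutator-simple, $y\in Ay$ for all $y\in A$, $A$ an ideal of the unital algebra $M=B(X)$, and $T$ surjective with $T(y)=u_y y u_y^{-1}$ for suitable invertible $u_y\in M$ --- Corollary~\ref{o0} applies directly and yields that $T$ is a Jordan automorphism, completing the proof.

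The main obstacle is the middle step: justifying that the invertible operator $u_x$ implementing the automorphism $\Phi_x$ can be chosen \emph{bounded with bounded inverse}, rather than merely an abstract linear bijection of $X$. In the purely algebraic setting of Corollary~\ref{o1} no continuity is required, but here one must ensure $u_x\in B(X)$ is invertible in $B(X)$; this relies on the automatic continuity of algebra automorphisms of $F(X)$ (equivalently, on the structure theory of rank-one preserving maps on normed spaces), and it is the only place where the normed structure of $X$ genuinely enters. Everything else --- the ideal property, the factorisation $y\in Ay$ via a continuous finite rank idempotent, and the final invocation of Corollary~\ref{o0} --- is routine.
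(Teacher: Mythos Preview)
Your proposal is correct and follows essentially the same route as the paper's proof: apply Corollary~\ref{o0} with $A=F(X)$ and $M=B(X)$, verify commutator-simplicity via Proposition~\ref{cfrank}, obtain $y\in Ay$ from a continuous finite-rank projection onto the (finite-dimensional) range of $y$, and realise each $\Phi_x$ as conjugation by an invertible $u_x\in B(X)$. The only difference is that where you sketch an argument for the boundedness of $u_x$ and $u_x^{-1}$ via automatic continuity and rank-one idempotents, the paper simply invokes \cite[Theorem~3.1]{Cher}, which directly gives that every algebra automorphism of $F(X)$ is of the form $y\mapsto u y u^{-1}$ with $u\in B(X)$ invertible; citing that result would tighten your middle paragraph, which as written is the one place your argument is a bit loose.
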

\begin{proof}
Set $A=F(X)$, and let $M$ be the algebra of all continuous linear operators on $X$.
Then Proposition~\ref{cfrank} shows that $A$ is a commutator-simple algebra.
If $x\in A$, then the range $Y$ of $x$ is finite-dimensional and therefore there exists
a continuous linear projection $P$ of $X$ onto $Y$. Consequently, $P\in A$ and $x=Px$, which
gives $x\in Ax$.
Moreover, $M$ is a unital algebra and $A$ is an ideal of $M$.
Take $x\in A$.
Then there exists an automorphism $\Phi_x\colon F(X)\to F(X)$ such that
$T(x)=\Phi_x(x)$. By~\cite[Theorem~3.1]{Cher}, 
there exists a continuous invertible linear operator $u_x\colon X\to X$ such that
$\Phi_x(y)=u_x yu_x^{-1}$ for each $y\in F(X)$.
From Proposition~\ref{o0} we obtain that $T$ is a Jordan automorphism.
\end{proof}

\begin{corollary}\label{o3}
Let $H$ be a  Hilbert space, 
let $1\le p\le\infty$, and 
let $S^p(H)$ be the algebra of $p$th Schatten class operators on $H$.
Then every continuous surjective local automorphism $T$ of $S^p(H)$ is a Jordan automorphism.
Further, if $H$ is an infinite-dimensional separable Hilbert space, then $T$ is an automorphism.
\end{corollary}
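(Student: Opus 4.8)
The plan is to reduce the whole statement to the algebra $F(H)$ of finite rank operators, where Corollary~\ref{o2} is available, and then to transport the conclusion to $S^p(H)$ by density and continuity; the point of passing to $F(H)$ is that it is commutator‑simple (Proposition~\ref{cfrank}), which is what ultimately makes Corollary~\ref{o2} applicable, whereas $S^p(H)$ itself need not be commutator‑simple for large $p$. Throughout, $S^\infty(H)$ is read as the algebra $K(H)$ of compact operators, and I write $A=S^p(H)$. First I would note that $F(H)$ is the unique minimal nonzero (two‑sided) ideal of $A$ — equivalently, its socle — and is therefore carried onto itself by every algebra automorphism of $A$. Hence, writing $T(x)=\Phi_x(x)$ with $\Phi_x$ an automorphism of $A$ for each $x\in A$, one gets $T(F(H))\subseteq F(H)$; the restrictions $\Phi_x|_{F(H)}$ are automorphisms of $F(H)$ realizing $T|_{F(H)}$ as a local automorphism of $F(H)$; and $T|_{F(H)}$ is surjective onto $F(H)$, since $T$ is injective ($T(x)=0$ forces $x=\Phi_x^{-1}(0)=0$), hence bijective, and $T^{-1}(F(H))\subseteq F(H)$ as well. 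Corollary~\ref{o2}, applied with the normed space $X=H$, then shows $T|_{F(H)}$ is a Jordan automorphism of $F(H)$. Since $F(H)$ is dense in $A$ and $T$ is continuous, the map $x\mapsto T(x^2)-T(x)^2$ is continuous and vanishes on $F(H)$, hence on all of $A$; so $T$ is a bijective Jordan homomorphism of $A$, i.e.\ a Jordan automorphism, which proves the first assertion.

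For the second assertion I would assume $H$ infinite‑dimensional and separable and first pin down the global form of $T$. As $F(H)$ is a prime ring, Herstein's theorem on Jordan homomorphisms onto prime rings shows that $T|_{F(H)}$ is an automorphism or an antiautomorphism of $F(H)$. By the theorem of Chernoff already invoked in Corollary~\ref{o2} (see~\cite{Cher}), in the first case $T|_{F(H)}(y)=vyv^{-1}$ for some invertible $v\in B(H)$; in the second case, composing $T|_{F(H)}$ with the transpose $y\mapsto y^{t}$ relative to a fixed orthonormal basis of $H$ — an antiautomorphism of $F(H)$ — yields an automorphism of $F(H)$, whence $T|_{F(H)}(y)=vy^{t}v^{-1}$ for some invertible $v\in B(H)$. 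In either case the right‑hand side defines a bounded map on $A=S^p(H)$ — conjugation by $v$ is bounded on every Schatten class, and the transpose relative to an orthonormal basis is $\|\cdot\|_p$‑isometric on it — so, $F(H)$ being dense in $A$ and $T$ continuous, that identity holds on all of $A$. In the first case $T$ is an inner automorphism and we are done.

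The heart of the matter, and the step I expect to be the main obstacle, is excluding the antiautomorphism case; this is exactly where infinite‑dimensionality and separability genuinely enter, since on $F(H)$ alone the transpose is a legitimate surjective local automorphism that fails to be an automorphism (as in the matrix remark following Corollary~\ref{c}). I would argue as follows: for $x\in A$, injectivity of $x$ as an operator on $H$ is an algebraic property of $x$ in $A$ — namely, $x$ is injective precisely when $xz=0$ with $z\in F(H)$ forces $z=0$ — and is therefore preserved by every automorphism of $A$. Now use separability to fix an orthonormal basis $(e_n)_{n\ge1}$ of $H$ and set $x_0=\sum_{n\ge1}2^{-n}\,e_{n+1}\otimes e_n^{*}\in S^p(H)$, a weighted unilateral shift; then $x_0$ is injective, whereas $x_0^{t}=\sum_{n\ge1}2^{-n}\,e_n\otimes e_{n+1}^{*}$ annihilates $e_1$, so neither $x_0^{t}$ nor any conjugate $vx_0^{t}v^{-1}$ is injective. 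Hence $T(x_0)=vx_0^{t}v^{-1}$ is not injective, contradicting the fact that $T(x_0)=\Phi_{x_0}(x_0)$ is the image of the injective operator $x_0$ under the automorphism $\Phi_{x_0}$. Therefore the antiautomorphism case cannot occur and $T$ is an automorphism. (If one prefers, the same contradiction is reached faster by invoking that every automorphism of $S^p(H)$ is spatial: the antiautomorphism case would force $vx^{t}v^{-1}=u_xxu_x^{-1}$ for each $x$, i.e.\ every $x\in S^p(H)$ similar to its transpose by a bounded invertible operator, which $x_0$ refutes.)
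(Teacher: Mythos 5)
Your proposal is correct and follows essentially the same route as the paper: restrict $T$ to $F(H)$, invoke Corollary~\ref{o2}, extend by density and continuity, and then (for separable infinite-dimensional $H$) rule out the antiautomorphism case by means of a weighted unilateral shift that is injective while its transpose is not. The only cosmetic differences are that you justify the invariance of $F(H)$ via the socle rather than via Chernoff's spatial form of each $\Phi_x$, and you phrase the final contradiction through automorphism-invariance of injectivity instead of exhibiting an explicit nonzero vector in the kernel of $s$, as the paper does.
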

\begin{proof}
Our method consists in considering  the restriction of $T$ to the ideal $F(H)$ of $S^p(H)$.
Take $x\in F(H)$. Then there exists an automorphism $\Phi_x\colon S^p(H)\to S^p(H)$ such that
$T(x)=\Phi_x(x)$. By~\cite[Corollary~3.2]{Cher}, 
there exists a continuous invertible linear operator $u_x\colon H\to H$ such that
$\Phi_x(y)=u_xyu_x^{-1}$ for each $y\in S^p(H)$.
Consequently, $T(x)=u_x xu_x^{-1}\in F(H)$ and hence $T$ maps $F(H)$ into $F(H)$ and is a local automorphism of $F(H)$. 
Further, if $y\in F(H)$, then there exists an $x\in S^p(H)$ with $T(x)=y$. 
Since $T(x)=u_xxu_x^{-1}$, it follows that  $x=u_x^{-1}yu_x\in F(H)$.
This shows that $T$ is a surjective local automorphism of $F(H)$. Corollary~\ref{o2} tells us that $T$
is a Jordan automorphism of $F(H)$.
Since $F(H)$ is dense in $S^p(H)$ and $T$ is continuous on $S^p(H)$, it may be concluded that $T$ is a Jordan
automorphism of $S^p(H)$. 
From~\cite{Her} we see that $T$ is either an automorphism or an antiautomorphism.

We now suppose that $H$ is an infinite-dimensional separable Hilbert space,
let $(\xi_n)$ be an orthonormal basis of $H$, 
let $s\in S^p(H)$ be the weighted shift operator defined through
\[
s(\xi_n)=2^{-n}\xi_{n+1}\quad \]
for all $n\in\mathbb{N}$,
so that
\[
s^*(\xi_1)=0,
\quad
s^*(\xi_n)=2^{-n}\xi_{n-1}\quad n\ge 2,
\]
and let $u_s\colon H\to H$ be a continuous invertible linear operator such that
$T(s)=u_ssu_s^{-1}$.
Assume towards a contradiction that $T$ is an antiautomorphism. 
Take a conjugation $c$ on $H$ and define $\Phi$ on $S^p(H)$ by $\Phi(x)=T(cx^*c)$ for each $x\in S^p(H)$.
Then $\Phi$ is an automorphism of $S^p(H)$ and~\cite[Corollary~3.2]{Cher} shows that there exists
a continuous invertible linear operator $v\colon H\to H$ such that $\Phi(x)=vxv^{-1}$ for each $x\in S^p(H)$.
We thus get
\[
u_s s u_s^{-1}=T(s)=\Phi(cs^*c)=vcs^*cv^{-1}.
\]
Then the operator $u_s^{-1}vc$ is invertible, so that $\xi=u_s^{-1}vc\xi_1\ne 0$ and, on the other hand, 
\[
s\xi=u_s^{-1}vcs^*cv^{-1}u_s\xi=u_s^{-1}vcs^*\xi_1=0.
\]
This is a contradiction as $s$ is injective.
\end{proof}

\end{document}